\documentclass[a4paper,10pt]{article}
\usepackage{pgf,tikz}
\usetikzlibrary{arrows}
\usetikzlibrary{decorations.pathreplacing}
\usepackage{latexsym}
\usepackage{amsfonts, amsmath, amssymb, amsthm}
\usepackage[utf8]{inputenc}
\usepackage[T1]{fontenc}
\usepackage[english]{babel}
\usepackage{stmaryrd}
\usepackage{subfig}
\usepackage{frcursive}
\usepackage{mathrsfs}
\usepackage{enumitem}
\newcommand{\N}{\mathbb{N}}

\newcommand{\R}{\mathbb{R}}

\newcommand{\lang}{\mathcal{L}}
\newcommand{\A}{\mathcal{A}}

\newtheorem{thm}{Theorem}
\newtheorem*{thm*}{Theorem}
\newtheorem{lemma}{Lemma}[section]
\newtheorem{prop}[lemma]{Proposition}
\newtheorem*{prop*}{Proposition}
\newtheorem{cor}[lemma]{Corollary}
\newtheorem{defi}[lemma]{Definition}
\newtheorem{theorem}[lemma]{Theorem}

\theoremstyle{definition}
\newtheorem{rem}[lemma]{Remark}
\newtheorem{ex}[lemma]{Example}


\title{Thermodynamic formalism and $k$-bonacci substitutions}
\author{Jordan Emme \thanks{Aix-Marseille Université, CNRS, Centrale Marseille, I2M, UMR 7373, 13453 Marseille, France. E-mail: jordan.emme@univ-amu.fr}}
\date{}
\begin{document}

\maketitle
\begin{abstract}
We study $k$-bonacci substitutions through the point of view of thermodynamic formalism. For each substitution we define a renormalization operator associated to it and examine its iterates over potentials in a certain class. We also study the pressure function associated to potentials in this class and prove the existence of a freezing phase transition which is realized by the only ergodic measure on the subshift associated to the substitution.
\end{abstract}

\section{Introduction}
\subsection{Background}

Given a dynamical system $(X,T)$ and a continuous function $V:X\rightarrow\R_+$ called the potential, we define the pressure function:
$$
\forall \beta \in \R_+,\, P(\beta)=\sup\left\{h_{\mu} - \beta\int_X V d\mu\right\},
$$
the sup being taken over all $T$-invariant probability measures on $X$ and $h_{\mu}$ being the Kolmogorov entropy. An interesting question is to determine its regularity. More precisely, a phase transition is a point $\beta_0$ where the pressure function is not analytic. Though a restrictive property, the regularity of the potential can transfer to the pressure function. It is known for example, that in the case of symbolic dynamics, when our dynamical system is a subshift of finite type, having a Hölder potential implies that the pressure function is analytic. Such phase transitions and objects have been studied in \cite{bowen, coronel_low_temp,ruelle,sarig} for instance.

In our case we are interested in the fullshift on unilateral infinite sequences $(\mathcal{A}^\N, \sigma)$ where $\mathcal{A}$ is a finite alphabet, and we define our potential depending on a substitution. This potential was already studied in \cite{bruin_leplaideur13} for Thue-Morse substitution,  in \cite{bruin_leplaideur15} Fibonacci substitution, and then  in \cite{bhl} for a large class of substitutions (which includes Thue-Morse but not Fibonacci). Our main focus is the renormalization of this potential for $k$-bonacci substitutions. The means to that end are essentially word combinatorics on the full-shift and on the subshift associated to the substitution. The renormalization operator was introduced in \cite{baraviera_renormalization} for constant length substitution, adapted to Fibonacci substitution in \cite{bruin_leplaideur15}  and further to non constant length substitution in \cite{bhl}. It was introduced for its expected links with phase transition \--- which we define later\--- but such links are not understood at the moment. 

We are also interested in freezing phase transition, which is a critical $\beta$ after which the pressure function is affine. We show that we can apply a criterion from \cite{bhl} in the case of $k$-bonacci substitutions in order to get freezing phase transitions. 

This paper is inspired by \cite{bhl}, in which the authors treat the case of the existence of freezing phase transitions and of the renormalization of potentials for a certain class of substitutions: $2$-full marked primitive aperiodic substitutions. We recall that a substitution is $2$-full if every word of length $2$ is in its language and marked if both the set of first and last letters of the images of letters by the substitution are the whole alphabet. We adapt their techniques to a family of substitutions that are neither marked nor 2-full. Indeed, $k$-bonacci substitutions not being marked changes the study of the renormalization operator, in particular its first step which consists of Proposition \ref{p.kbo_prefix}. This involves a combinatorial study of $k$-bonacci substitutions, and later, a precise understanding of the 'recognizability' of these substitutions which is stronger than in the cases studied in \cite{bhl} and implies a simpler expression for the expression of the renormalization operator.

%
%

\subsection{Results}

Let $\mathcal{A}$ be a finite set called an alphabet. In particular, let us denote $\A_k:=\left\{0,1,..,k-1\right\}$ where $k$ is a positive integer. $\mathcal{A}^*$ denotes the free monoid generated by $\mathcal{A}$ by concatenation. In other words, it is the set of finite words over the alphabet $\mathcal{A}$ with the operation of concatenation. We denote by $\varepsilon$ the empty word.
 
 For any word $w$ in $\mathcal{A}^*$, $|w|$ is the length of the word $w$. For any letter $a$ in $\mathcal{A}$, $|w|_{a}$ denotes the number of occurrences of $a$ in $w$.
 
  $\mathcal{A}^\N$ denotes the set of right-handed sequences over the alphabet $\mathcal{A}$. An element of $\mathcal{A}^\N$ is called a configuration. This set is endowed with the product topology which is compatible with the distance $d$ defined by $d(x,y)=\frac{1}{2^{\min\{n\in \N | x_n \neq y_n\}}}$ whenever $x\neq y$.
  
  Let $u=u_0...u_n$ and $v=v_0...v_m$ be two words in $\mathcal{A}^*$. If there exists a non negative integer $k$ such that $u=v_k...v_{k+n-1}$ then we say that $u$ is a subword of $v$ and we write $u \sqsubset v$.
  Let $x \in \mathcal{A}^\N$ and $n$ and $m$ be two positive integers. We denote by $x_{[n...n+m]}$ the word $x_nx_{n+1}...x_{n+m}$.

A substitution is a non erasing morphism of the free monoïd $\A^*$. Let us now introduce, for any integer $k$, the $k$-bonacci substitution. 

\begin{defi}\label{def.kbo}
  For any integer $k\geq1$, we define the $k$-bonacci substitution $s_k: \mathcal{A}_k^* \to \mathcal{A}_k^*$ on the generators of $\mathcal{A}_k^*$ in the following way:
  $$
  \forall a \in \mathcal{A}_k\backslash\{k-1\},\  s_k(a)=0(a+1)
  $$
  and
  $$
  s_k(k-1)=0 
  $$
\end{defi}

Then, for a continuous function $V: \A^\N \rightarrow \R_+$ called the potential, we defined the renormalization operator introduced in \cite{baraviera_renormalization}:
 
$$
\begin{array}{ccccc}
  R&:&\mathcal{C}(\mathcal{A}_k^\N,\R)&\rightarrow&\mathcal{C}(\mathcal{A}_k^\N,\R)\\
   & & V(x)                              &\mapsto & \displaystyle{\sum_{j=0}^{|s_k(x_0)|-1} V\circ \sigma^j \circ s_k(x)}.
 \end{array}
$$

Let us also define the attractor $\Sigma_{s_k}$ in the following way:

let $\omega=\lim_{+\infty}s_k^n(0)$, $\Sigma_{s_k}= \overline{\text{Orb}(\omega)}$, the set ${\text{Orb}(\omega)}$ being the orbit of the point $\omega$ under the action of the shift.

Then, for $x \in \A_k^\N \backslash \Sigma_{s_k}$, define $\delta_{s_k}(x)$ with the following identity $d(x,\Sigma_{s_k})=1/2^{\delta_{s_k}(x)}$.
If $x \in \Sigma_{s_k}$, $\delta_{s_k}(x)=+\infty$.

Our first theorem deals with the existence of a fixed point for this renormalization operator.
\begin{thm}\label{t.renorm}
 Let $k \geq 2$, there exists $U \in \mathcal{C}\left(\mathcal{A}_k^\N,\R_+\right)$ such that $RU=U$ given by:
  $$
 \forall x \in \mathcal{A}_k^\N,\, U(x)=\log\left(1+\frac{v_{x_0}}{\frac{\lambda}{\lambda-1}+\sum_{l\in\mathcal{A}_k} v_l |x_{[0..\delta_{s_k}(x)-1]}|_l -v_{x_0}}\right)
 $$
 where $\lambda$ is the dominant root of the polynomial $X^k-\displaystyle\sum_{j=0}^{k-1}X^j$ and 
  $$
 \forall l \in \mathcal{A}_k,\, v_l=\frac{1}{\lambda^{k-1-l}}\sum_{j=0}^{k-1-l}\lambda^j.
 $$
 Moreover, if $V:\mathcal{A}_k^\N\rightarrow\R_+$ is of the form:
 $$
\forall x \in \mathcal{A}_k^\N,\, V(x)=\frac{g(x)}{\delta_{s_k}(x)^\alpha}+\frac{h(x)}{\delta_{s_k}(x)^\alpha}
$$
with $g$ a positive continuous function, $h$ being $0$ on $\Sigma_{s_k}$ and continuous and $\alpha >0$, then, for any $x$ in $\mathcal{A}_k^\N\backslash\Sigma_{s_k}$:

$$
 \lim_{n\rightarrow +\infty} R^n V (x)=\left\{\begin{array}{ll}
                                       0                              &\text{if}\; \alpha>1\\
                                       +\infty                        &\text{if}\; \alpha<1\\
                                       \int\, g\, d\mu\, \cdot \, U(x)  &\text{if}\; \alpha=1
                                              \end{array}\right.
 $$
 where $\mu$ is the only ergodic probability measure on $\Sigma_{s_k}$.
\end{thm}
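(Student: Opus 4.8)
The plan is to treat the two assertions of Theorem~\ref{t.renorm} separately, but both rest on a single combinatorial input: a precise description of how the gap function $\delta_{s_k}$ transforms under $s_k$ and $\sigma$, which is exactly the recognizability of $s_k$ recorded in Proposition~\ref{p.kbo_prefix}. Writing $\delta=\delta_{s_k}(x)$ and $S(x)=\frac{\lambda}{\lambda-1}+\sum_{l}v_l|x_{[0..\delta-1]}|_l=\frac{\lambda}{\lambda-1}+\sum_{i=0}^{\delta-1}v_{x_i}$, the candidate fixed point reads $U(x)=\log\frac{S(x)}{S(x)-v_{x_0}}$. I would first record the algebraic identities forced by the definitions of $\lambda$ and of the $v_l$: namely $v_0=\lambda$, $v_{k-1}=1$, and $v_0+v_{a+1}=\lambda v_a$ for $a<k-1$; equivalently, $(v_l)_l$ spans the left eigenline of the incidence matrix of $s_k$ for the eigenvalue $\lambda$, so that $\sum_{c}|s_k(a)|_c\,v_c=\lambda v_a$ for every letter $a$.

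The crucial geometric step is to prove $S(s_k(x))=\lambda S(x)$ for every $x\notin\Sigma_{s_k}$. By recognizability the maximal prefix of $s_k(x)$ agreeing with $\Sigma_{s_k}$ is $s_k(x_{[0..\delta-1]})$ followed by exactly one extra letter $0$, since any two distinct images $s_k(a),s_k(b)$ share the common prefix $0$ and nothing more. The eigenvector relation makes the total $v$-weight of $s_k(x_{[0..\delta-1]})$ equal to $\lambda\sum_{i<\delta}v_{x_i}$, the extra $0$ contributes $v_0=\lambda$, and the constant $\frac{\lambda}{\lambda-1}$ is tuned precisely so that these combine to $\lambda S(x)$. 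With this in hand $RU=U$ is a direct computation in two cases. For $x_0=a<k-1$ one has $|s_k(x_0)|=2$ and $RU(x)=U(s_k x)+U(\sigma s_k x)$; using $S(s_k x)=\lambda S(x)$, $(s_k x)_0=0$, $(\sigma s_k x)_0=a+1$ and $S(\sigma s_k x)=\lambda(S(x)-1)$, the two logarithms collapse to $\log\frac{\lambda S(x)}{\lambda S(x)-(v_0+v_{a+1})}$, which is $U(x)$ by $v_0+v_{a+1}=\lambda v_a$. For $x_0=k-1$ one has $|s_k(x_0)|=1$ and $RU(x)=U(s_k x)=\log\frac{S(x)}{S(x)-1}=U(x)$ since $v_{k-1}=1$.

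For the limit statement I would first establish the iterate formula $R^nV(x)=\sum_{j=0}^{|s_k^n(x_0)|-1}V(\sigma^j s_k^n(x))$ by induction on $n$, the inductive step being a direct check using the block structure of $s_k$. Set $L_n=|s_k^n(x_0)|$ and $D_n=\delta_{s_k}(s_k^n x)$; recognizability gives $\delta_{s_k}(\sigma^j s_k^n x)=D_n-j$ for $0\le j<L_n$, and $M_n:=D_n-L_n+1\to+\infty$. Writing $V=(g+h)/\delta_{s_k}^\alpha$ and substituting yields $R^nV(x)=\sum_{m=M_n}^{D_n}\frac{g(\mathrm{pt}_m)+h(\mathrm{pt}_m)}{m^\alpha}$, where $\mathrm{pt}_m=\sigma^{D_n-m}s_k^n x$ satisfies $\delta_{s_k}(\mathrm{pt}_m)=m$. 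Since $g$ is bounded above and below by positive constants on the compact space $\A_k^\N$, the cases $\alpha\neq 1$ follow by sandwiching between $c\sum_{m=M_n}^{D_n}m^{-\alpha}$ and $C\sum_{m=M_n}^{D_n}m^{-\alpha}$ and noting $M_n,D_n\sim\text{const}\cdot\lambda^n$ with $M_n/D_n$ bounded away from $1$: for $\alpha>1$ the tail obeys $\sum_{m\ge M_n}m^{-\alpha}\le C'M_n^{1-\alpha}\to 0$, and for $\alpha<1$ the sum dominates $(D_n^{1-\alpha}-M_n^{1-\alpha})/(1-\alpha)\to+\infty$.

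The heart is the case $\alpha=1$. The points $\{\mathrm{pt}_m\}_{m=M_n}^{D_n}$ lie within $2^{-M_n}$ of the orbit segment $\{\sigma^i p\}_{i=0}^{L_n-1}$, where $p\in\Sigma_{s_k}$ realizes $D_n$; since $M_n\to\infty$ while the total weight $\sum_{m=M_n}^{D_n}\frac1m$ stays bounded, I may replace the point-values of $g$ and $h$ by their values on this orbit segment up to a vanishing error. The $h$-contribution then tends to $0$, because $|h(\mathrm{pt}_m)|\le\omega_h(2^{-m})\le\omega_h(2^{-M_n})$ by uniform continuity and $h|_{\Sigma_{s_k}}=0$, so it is bounded by $\omega_h(2^{-M_n})\sum_{m}\frac1m\to 0$. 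For the $g$-contribution I would prove a weighted unique-ergodicity statement: for continuous $f$, $\sum_{i=0}^{L_n-1}\frac{f(\sigma^i p)}{D_n-i}\to\big(\int f\,d\mu\big)\lim_n\sum_{m=M_n}^{D_n}\frac1m$, obtained by cutting $[0,L_n)$ into blocks on which the slowly varying weight $1/(D_n-i)$ is nearly constant and invoking unique ergodicity of $(\Sigma_{s_k},\sigma)$ on each block. Applied to $f=g$ this gives $\int g\,d\mu\cdot\lim_n\sum_m\frac1m$; applied through the fixed-point identity $U(x)=\sum_{j=0}^{L_n-1}U(\sigma^j s_k^n x)$ together with the asymptotics $U(\mathrm{pt}_m)=\frac{v_{(\mathrm{pt}_m)_0}}{m\bar v}(1+o(1))$, where $\bar v=\sum_{l\in\A_k}v_l\,\mu([l])$, it identifies $\lim_n\sum_{m=M_n}^{D_n}\frac1m$ with $U(x)$, bypassing any explicit computation of $D_n$. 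Combining these gives $R^nV(x)\to\int g\,d\mu\cdot U(x)$. I expect the main obstacle to be precisely this weighted equidistribution step, namely controlling the errors uniformly in $p$ while the weights range over a full factor $\lambda^n$ and ensuring the block-decomposition error vanishes; the supporting combinatorial fact $\delta_{s_k}(\sigma^j s_k^n x)=D_n-j$ (uniform recognizability) is what renders the weights explicit and must be established first.
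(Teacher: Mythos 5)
Your overall architecture is sound, and it is genuinely different from the paper's. The paper never verifies $RU=U$ by hand: it first computes $\lim_n R^nV_0$ for the special potential $V_0=1/\delta_{s_k}$, converting $\sum_j 1/\delta_{s_k}(\sigma^j s_k^n x)$ into a Riemann sum via the Perron--Frobenius asymptotics $|s_k^n(l)|=\gamma_l\lambda^n+O(\theta^n)$ and Proposition \ref{thm.length_kbo}, evaluating the limit as $\int_0^{\gamma_{x_0}}\frac{dt}{c-t}=U(x)$; the fixed-point property then comes for free from pointwise convergence (since $R^{n+1}V_0=R(R^nV_0)$ and $R$ is a finite sum of compositions), and the general potential is handled by quoting the weighted-unique-ergodicity lemma of B\'edaride--Hubert--Leplaideur (Lemma \ref{l.technical_abel}). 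You instead verify $RU=U$ algebraically, from $v_0=\lambda$, $v_{k-1}=1$, $v_0+v_{a+1}=\lambda v_a$ and the conjugation $S\circ s_k=\lambda S$, and then use $R^nU=U$ to identify $\lim_n\sum_{m=M_n}^{D_n}1/m$ with $U(x)$, thereby avoiding the explicit Riemann-sum computation and the explicit asymptotics of $\delta_{s_k}(s_k^nx)$ altogether. This is a legitimate and arguably more conceptual organization (it explains where the constant $\lambda/(\lambda-1)$ and the eigenvector coordinates come from); your treatment of the $\alpha\neq1$ cases, of the $h$-term, and your block-decomposition argument for weighted equidistribution all correspond in substance to what the paper does or cites.

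There is, however, one genuine gap, in the case $x_0=a<k-1$ of your fixed-point verification. The identity $S(\sigma s_kx)=\lambda(S(x)-1)$ is equivalent to $\delta_{s_k}(\sigma s_kx)=\delta_{s_k}(s_kx)-1$, i.e.\ to the statement that deleting the leading $0$ of the maximal prefix $s_k(x_{[0..\delta-1]})0$ creates no new right-extensions. This is not supplied by ``recognizability'' as available in the paper: Proposition \ref{thm.length_kbo}, and Proposition \ref{p.recog_kbo} on which its proof rests, hold only for $n\geq k$, whereas you invoke the shifted statement at $n=1$, $j=1$. The claim is true, but it needs its own short argument special to $s_k$: every letter $b\neq 0$ occurs in words of $\lang_{s_k}$ only as the second letter of the block $s_k(b-1)=0b$, so every occurrence of $(a+1)s_k(x_{[1..\delta-1]})0$ in the fixed point is preceded by a $0$ and extends to an occurrence of $0(a+1)s_k(x_{[1..\delta-1]})0$; hence the right-extensions of the two words coincide, and the forbidden extension from Proposition \ref{p.kbo_prefix} remains forbidden. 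Without this step $RU=U$ is unproven, and since your $\alpha=1$ argument bootstraps off $R^nU=U$, the entire limit identification would collapse with it. A second, smaller point: your weighted equidistribution statement is phrased with $\lim_n\sum_{m=M_n}^{D_n}1/m$ on the right-hand side, but the existence of that limit is precisely what your bootstrap is meant to produce; to avoid circularity, state it as $\sum_{i<L_n}f(\sigma^ip)/(D_n-i)-\int f\,d\mu\cdot\sum_{m=M_n}^{D_n}1/m\to0$, and add the (elementary) bound keeping $M_n/D_n$ away from $0$ so that $\sum_{m=M_n}^{D_n}1/m$ stays bounded.
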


In particular, let us give the expression of the fixed point $U$ of the renormalization operator for the case $k=3$. Let $\lambda$ be the dominant root of the Tribonacci polynomial $X^3-X^2-X-1$. This root can be computed via Cardan's method and is
$$\lambda=\frac{\sqrt[3]{19+3\sqrt{33}}+\sqrt[3]{19-3\sqrt{33}}+1}{3}.$$

$U$ is given by~:

$$
\forall x \in \A_3^\N,\ U(x)=\left\{\begin{array}{lcr}
                                     \log\left(1+\frac{\lambda}{\frac{\lambda}{\lambda-1}+\lambda|w|_0+\frac{\lambda+1}{\lambda}|w|_1 + |w|_2+\lambda}\right)&\text{if}&x_0=0\\
                                     \log\left(1+\frac{\frac{\lambda+1}{\lambda}}{\frac{\lambda}{\lambda-1}+\lambda|w|_0+\frac{\lambda+1}{\lambda}|w|_1 + |w|_2+\frac{\lambda+1}{\lambda}}\right)&\text{if}&x_0=1\\
                                     \log\left(1+\frac{1}{\frac{\lambda}{\lambda-1}+\lambda|w|_0+\frac{\lambda+1}{\lambda}|w|_1 + |w|_2+1}\right)&\text{if}&x_0=2
                                    \end{array}\right.
$$
where $w$ is the word $x_{[0..\delta_{s_{3}}(x)-1]}$.

Our second theorem deals with the existence of freezing phase transition for a potential in the same family as in Theorem \ref{t.renorm}.
\begin{thm}\label{t.fpt}
 For any integer $k \geq 2$, for any potential $V:\A_k^\N\rightarrow\R_+$ of the form $V(x)=\frac{g(x)}{\delta_{s_k}(x)}+\frac{h(x)}{\delta_{s_k}(x)}$, then there exists a real number $\beta_c$ such that:
 \begin{itemize}
  \item $P$ is analytic on $[0,\beta_c)$: there is a unique equilibrium state which has full support for every $\beta \in [0,\beta_c)$.
  \item $P$ is identically zero on $(\beta_c,0]$ and the unique ergodic measure of $(\Sigma_{s_k},\sigma)$ is the unique equilibrium state.
 \end{itemize}
 Such a point $ \beta_c$  is called a freezing phase transition.
\end{thm}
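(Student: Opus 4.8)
The plan is to work directly with the variational definition of the pressure and to reduce the statement to a single quantitative estimate relating entropy to the mass given to the potential, the remaining implications then following from convex analysis together with the criterion of \cite{bhl}. First I would record the structural features of $V$ that drive the transition. Since $V:\A_k^\N\to\R_+$ and $\delta_{s_k}\equiv+\infty$ on $\Sigma_{s_k}$ while $h$ vanishes there, the potential satisfies $V\geq0$ and $V\equiv0$ on $\Sigma_{s_k}$, whereas $V>0$ off $\Sigma_{s_k}$ because $g>0$ and $\delta_{s_k}$ is finite there. As $s_k$ is primitive and aperiodic, $(\Sigma_{s_k},\sigma)$ is uniquely ergodic with measure $\mu$ and has zero topological entropy, so $h_\mu=0$. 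Feeding $\mu$ into the variational principle gives $h_\mu-\beta\int V\,d\mu=0$ for every $\beta$, whence $P(\beta)\geq0$ throughout, while $P(0)=h_{\mathrm{top}}(\A_k^\N,\sigma)=\log k>0$. Since $\beta\mapsto P(\beta)$ is a supremum of affine non-increasing functions it is convex, non-increasing and Lipschitz with constant $\sup V$; I would therefore set $\beta_c:=\sup\{\beta\geq0:\ P(\beta)>0\}$ and note at once that $\beta_c>0$ by continuity.

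The heart of the argument is to show $\beta_c<+\infty$, and this is where I expect the real difficulty to lie. I would encode it through the concave entropy--energy profile
$$
\mathcal{E}(e):=\sup\Big\{h_\nu:\ \nu\ \sigma\text{-invariant},\ \int V\,d\nu=e\Big\},
$$
so that $P=\sup_e(\mathcal{E}(e)-\beta e)$ is the Legendre transform of $-\mathcal{E}$, and observe that $\mathcal{E}(0)=0$ since the only invariant measure carried by $V^{-1}(0)=\Sigma_{s_k}$ is $\mu$. The claim $\beta_c<\infty$ is then exactly the finiteness of the one-sided slope $\mathcal{E}'(0^+)$. To bound it I would exploit that $V$ decays like $1/\delta_{s_k}$: a measure with small energy $e$ must concentrate on configurations whose long prefixes lie in the language of $\Sigma_{s_k}$, so that $\int V\,d\nu$ is comparable to the frequency with which a $\nu$-typical orbit deviates from $\Sigma_{s_k}$. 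Using the recognizability of $s_k$ (and here the fact that $s_k$ is \emph{not} marked forces the sharper desubstitution analysis underlying Proposition \ref{p.kbo_prefix}) one can read each such deviation as an unambiguous defect carrying at most $\log k$ bits, and the number of defects per unit length is controlled by this frequency. This should yield a linear bound $h_\nu\leq C\int V\,d\nu$ with $C$ depending only on $k$ and on $\int g\,d\mu$, hence $\mathcal{E}'(0^+)\leq C<\infty$ and $\beta_c\leq C$. This combinatorial control of $\delta_{s_k}$ is, to my mind, the main obstacle, precisely because $V$ is far from Hölder and the substitution is neither marked nor $2$-full.

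With $\beta_c\in(0,\infty)$ in hand the two regimes follow. For $\beta\geq\beta_c$, concavity of $\mathcal{E}$ together with $\mathcal{E}(0)=0$ and $\mathcal{E}'(0^+)=\beta_c$ gives $\mathcal{E}(e)-\beta e\leq(\beta_c-\beta)e\leq0$ for $e\geq0$, so $P(\beta)=0$; moreover any equilibrium state $\nu$ must satisfy $h_\nu=\beta\int V\,d\nu$, which combined with $h_\nu\leq\beta_c\int V\,d\nu$ forces $\int V\,d\nu=0$ when $\beta>\beta_c$, hence $\nu$ is supported on $\Sigma_{s_k}$ and equals $\mu$ by unique ergodicity. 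For $\beta\in[0,\beta_c)$ one has $P(\beta)>0$, so every equilibrium state gives positive mass to the potential and does not concentrate on $\Sigma_{s_k}$; on that effective region the $1/\delta_{s_k}$ slow decay no longer degrades the regularity and $-\beta V$ behaves like a regular potential on the full shift. I would then invoke the criterion of \cite{bhl}: once the estimate of the previous paragraph is verified, its hypotheses provide a uniform spectral gap for the associated transfer operator on compact subsets of $[0,\beta_c)$, yielding a unique equilibrium state of full support and, by analytic perturbation of the leading eigenvalue, analyticity of $P$ on $[0,\beta_c)$. This completes the proof, the only genuinely new input over \cite{bhl} being the recognizability-based estimate that pins down $\beta_c$.
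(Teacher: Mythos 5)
Your proposal has the right global skeleton (convexity of $P$, $P\geq 0$ via the unique ergodic measure $\mu$ of zero entropy, the definition $\beta_c=\sup\{\beta:\,P(\beta)>0\}$, the dichotomy between the two regimes), but that part is generic convex analysis valid for any potential vanishing exactly on a uniquely ergodic zero-entropy subshift. All the content specific to the $k$-bonacci subshift is concentrated in steps you do not carry out, and this is a genuine gap. The estimate $h_\nu\leq C\int V\,d\nu$ --- which, as you rightly say, is equivalent to $\beta_c<\infty$ --- is only asserted: the heuristic that ``each deviation is an unambiguous defect carrying at most $\log k$ bits'' is not a proof. An invariant measure of small energy is carried by configurations assembled from long words of $\lang_{s_k}$, and bounding the entropy of such measures requires counting how many $n$-words can be built this way, which demands quantitative control on how words of the language extend and overlap. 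That control is exactly the missing combinatorial input, and nothing in your sketch produces it.

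Moreover, your appeal to the criterion of \cite{bhl} inverts its logic. The hypotheses of Theorem 3 of \cite{bhl} are not your entropy estimate; they are combinatorial conditions on the subshift itself: (i) $(\Sigma_{s_k},\sigma)$ is linearly recurrent; (ii) every bispecial word has length $c\lambda^n+o(\lambda^n)$ with $c$ in a finite set; (iii) two overlapping bispecial words overlap on at most a fixed proportion of the shorter one. Its conclusion is then the whole theorem (freezing transition, analyticity and full-support equilibrium state on $[0,\beta_c)$, and $\mu$ as the equilibrium state beyond). The paper's proof consists precisely of verifying (i)--(iii): (i) holds for any substitutive subshift by \cite{durand}; (ii) follows from classifying the bispecial words of $\lang_{s_k}$ as exactly the words $b_n=s_k^n(0)s_k^{n-1}(0)\cdots s_k(0)0$ (proved by a desubstitution induction) together with $|b_n|=\gamma_0\frac{\lambda^{n+1}}{\lambda-1}+o(\lambda^n)$ from Perron--Frobenius; (iii) follows because an overlap of two bispecial words is itself bispecial, hence equal to some $b_m$ with $m<n$, so $|u\cap v|/\min\{|u|,|v|\}\leq |b_m|/|b_{m+1}|\rightarrow 1/\lambda<1$. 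None of these verifications appears in your proposal, and nothing in it can substitute for them: without (ii) and (iii) you have neither the entropy bound nor the spectral-gap/analyticity statement you invoke on $[0,\beta_c)$ (recall that $-\beta V$ is not H\"older, so no off-the-shelf transfer-operator theory applies). As written, the proposal reduces the theorem to an unproved claim plus a black box quoted with the wrong hypotheses.
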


We recall that, in our settings, a Hölder regularity of the potential would imply the analyticity of the pressure function. We are interested in a potential supported outside the attractor of the substitution and which is not Hölder continuous. The family of potentials on which the previous theorems holds is a large class of examples which satisfy these conditions.

\subsection{Outline of the paper}

The general idea is that a substitution generates an attractor in the fullshift (the subshift associated to the substitution). We use the attractor to define a family of potentials on the full shift supported outside the attractor. Then the substitution can generate a renormalization operator for these potentials. Iterating the renormalization over a potential makes it converge towards a fixed point for the renormalization that is still a potential supported outside the attractor.

Then, for any potential in this family, the pressure function has a freezing phase transition and the mesure that realizes the supremum is the only ergodic measure supported on the attractor

The fact that $k$-bonacci substitutions are not left marked gives a different behaviour for the renormalization operator as points of the fullshift tend to converge faster, making, in that precise case, useless the 2-full hypothesis. However we cannot use the property of being marked and so the following techniques also adapt ideas from \cite{mosse_reco}.

Section \ref{s.substitutions} introduces the basic definitions for substitutions and their languages, as well as some classical properties of these objects. 

Then in Section \ref{s.kbo} we prove some crucial properties for our study regarding $k$-bonacci substitutions. We give an explicit formula for the speed of convergence towards the subshift, and further study the effect of applying the shift a ``small'' number of times. This is very important because it allows us to compute the fixed point for the renormalization operator of Theorem \ref{t.renorm}.
  
Section \ref{s.potential} is dedicated to the proof of Theorem \ref{t.renorm}. In the first subsection we prove a weaker version of the theorem ($\alpha=1$, $g \equiv 1$, $h\equiv 0$). The previous study of combinatorial properties allows us to compute the fixed point with sums very close to being Riemann sum, close enough that the same techniques work. The second subsections finishes the proof for the strong version of the theorem.

Finally, in Section \ref{s.fpt} we prove the existence of a freezing phase transition by applying a criterion given in \cite{bhl} (Theorem 3).

In order to help the reader understanding, notations being quite involved for some propositions, some proofs for the particular example of the Tribonacci substitution ($k=3$) are given at the end of the paper in Appendix \ref{s.tribo}. Almost every argument used in the involved proofs of the general case are already used in the proofs for Tribonacci hence this example is almost sufficient to understand the general case.

\section{Generalities on substitutions}\label{s.substitutions}

Let us recall the definition of the main object in our article:

\begin{defi}
 A substitution over the alphabet $\mathcal{A}$ is a non-erasing morphism over the monoid $\mathcal{A}^*$. 
\end{defi}

\begin{ex}
 The Fibonacci substitution defined over $\{0,1\}^*$ given by:
 \begin{align*}
& 0 \mapsto 01 \\
 &1 \mapsto 0
 \end{align*}
\end{ex}

Given a substitution $s$ over a finite alphabet $\mathcal{A}$, we can define the language of $s$:

\begin{defi}
 Let $\mathcal{A}$ be a finite alphabet and $s$ a substitution on this alphabet. The language of $s$ is:
$$
\lang_s=\{w \in \mathcal{A}^*\ | \  \exists a \in \mathcal{A}, \  \exists k \in \N, \  w \sqsubset s^k(a)\}
$$
(where $w \sqsubset s^k(a)$ means that $w$ is a subword of $s^k(a)$).
\end{defi}

\begin{rem}\label{r.factlang}
 The language $\lang_s$ of a substitution is factorial i.e. any subword of a word in $\lang_s$ is in $\lang_s$. It is also extendable i.e. for any word $w \in \lang_s$ there exists a pair $(a,b) \in \A$ such that $awb \in \lang_s$.
\end{rem}

The language of a substitution also has some special words:
\begin{defi}\label{d.bispecial}
 A word $w \in \lang_s$ is right-special if there exists two distinct letters $a$ and $b$ such that both $wa$ and $wb$ are in the language. 
  A word $w \in \lang_s$ is left-special if there exists two distinct letters $a$ and $b$ such that both $aw$ and $bw$ are in the language.  
  A word is bispecial if it is both left-special and right-special.
\end{defi}

This notion was first introduced in \cite{cassaigne_complexite} in order to study the complexity of a language (that is to say the function which counts the number of words of any given size in the language).

\begin{defi}
 Let $s$ be a substitution over an alphabet $\mathcal{A}=\{0,...,k-1\}$. We call incidence matrix the matrix $S \in \mathcal{M}_k(\N)$ defined by:
 $$
 \forall (i,j)\in\llbracket0,k-1\rrbracket^2, \  S_{i,j}=|s(j)|_{i}.
 $$
\end{defi}

\begin{defi}
 We say that a substitution is primitive if its incidence matrix is primitive. Namely:
 $$
 \exists n \in \N,\ \forall (i,j)\in\llbracket0,k-1\rrbracket^2, \  \left(S^n\right)_{i,j} >0.
 $$
\end{defi}

\begin{rem}
The point of asking for a primitive substitution is, mostly, to take advantage of linear algebra and more specifically Perron Frobenius theorem.
\end{rem}

\begin{defi}
Let $s$ be a substitution over the alphabet $\mathcal{A}$. 

If $\{s(a)_0\,|\, a \in \mathcal{A}\}=\mathcal{A}$, then $s$ is said to be left-marked.

If $\{s(a)_{|s(a)|-1}\,|\, a \in \mathcal{A}\}=\mathcal{A}$, then $s$ is said to be right-marked.

 A substitution which is both left-marked and right-marked is marked.
\end{defi}

 In all that follows, we study a family of primitive right-marked substitution.
 
 \begin{rem}
  Let $s$ be a primitive substitution over a finite alphabet $\mathcal{A}$ and assume that there exists a letter $a$ in $\mathcal{A}$ such that $s(a)$ starts by $a$ (which is always the case up to taking a power of $s$).
  
  Then the sequence $(s^n(a))_{n \in \N}$ converges to a right handed sequence in $\mathcal{A}^\N$ which is a fixed point for $s$ (if we extend the definition of $s$ to the set $\mathcal{A}^\N$).
  
  If a substitution $s$ admits only one fixed point, we note it $\omega^s$.
 \end{rem}
 
 \begin{defi}\cite{mosse_reco}\label{d.circularpoint}
 Let us define, for a given substitution $s$ with a unique fixed point $\omega^s$ the following family of sets:
 $$
 \forall n \in \N^*, D^n_s=\left\{\left|s^n(\omega^s_{[0...k]})\right|, \  k \in \N\right\} \cup \{0\}.
 $$
\end{defi}

\begin{rem}\label{r.circularpoint}
 It is obvious to check the following assertion for any substitution $s$ with a unique fixed point:
 $$
 \forall n \in \N^*, D^{n+1}_s \subset D^n_s.
 $$
\end{rem}
 
 \begin{defi}
  Let $s$ be a substitution over the alphabet $\mathcal{A}$. We define the subshift $\Sigma_s \subset \mathcal{A}^\N$ associated to the substitution $s$ by:
  $$
  \Sigma_s:= \{x \in \mathcal{A}^\N \  | \forall w \in \mathcal{A}^*,\  w \sqsubset x \implies w \in \lang_s\}.
  $$
 \end{defi}

We are usually interested in primitive substitutions which admit a non periodic fixed point. In these cases we generally study the dynamical system $(\Sigma_s,\sigma)$ where $\sigma$ is the shift on the set of right handed sequences. We recall that the shift $\sigma$ is defined by:
  $$
  \begin{array}{ccccc}
   \sigma &: & \mathcal{A}^\N & \rightarrow &\mathcal{A}^\N \\
          &   & x_0x_1...x_n...    & \mapsto     & x_1x_2...x_{n+1}...
  \end{array}
  $$
  
   \begin{rem}
For any substitution $s$ which admits a non ultimately periodic fixed point under the action of the shift, $\Sigma_s$ is a Cantor set for the product topology on $\mathcal{A}^\N.$
 \end{rem}
 
  We also recall the following theorem.
   \begin{theorem}[\cite{queffelec}]\label{t.ergo}
  If $s$ is a primitive substitution, then the dynamical system $(\Sigma_s,\sigma)$ is minimal, uniquely ergodic and has topological and Kolmogorov entropy 0.
 \end{theorem}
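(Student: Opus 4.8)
The plan is to treat the three conclusions---minimality, unique ergodicity, and vanishing entropy---separately, each as a consequence of primitivity. The unifying input is that primitivity supplies an integer $n_0$ with $(S^{n_0})_{i,j}>0$ for all $i,j$, so that $s^{n_0}(b)$ contains every letter of $\A$ for every $b\in\A$, together with Perron--Frobenius control of the growth of $(S^n)_{i,j}$ through the dominant eigenvalue $\lambda$ of $S$.

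For minimality I would show that $\lang_s$ (which is the language of $\Sigma_s$) is uniformly recurrent, i.e. every word occurs with bounded gaps; by the standard equivalence this is exactly minimality of $(\Sigma_s,\sigma)$. Fix $w\in\lang_s$, so $w\sqsubset s^k(a)$ for some $a$ and $k$. Since $a$ occurs in $s^{n_0}(b)$ for every $b$, applying $s^k$ gives $s^k(a)\sqsubset s^{n_0+k}(b)$, hence $w\sqsubset s^{n_0+k}(b)$ for all $b\in\A$. Now write an arbitrary long factor $v\in\lang_s$ as a subword of some $s^m(c)=s^{n_0+k}(d_0)\cdots s^{n_0+k}(d_r)$, where $d_0\cdots d_r=s^{m-n_0-k}(c)$; each block has length at most $L:=\max_b|s^{n_0+k}(b)|$. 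A counting argument on the block boundaries shows that any $v$ with $|v|\ge 2L$ entirely contains at least one block $s^{n_0+k}(d_i)$, and therefore contains $w$. Thus $w$ appears in every factor of length $2L$, with gaps bounded by $2L$, which is uniform recurrence.

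For unique ergodicity I would prove that every word has a frequency that is uniform over $\Sigma_s$ and independent of the starting position; by the standard criterion (uniform convergence of the Birkhoff averages of cylinder indicators) this forces $\mu([w])$ to take a single value across all invariant measures $\mu$, giving uniqueness. At the level of letters this is immediate from Perron--Frobenius: $(S^n)_{i,j}/\lambda^n$ converges to a rank-one limit independent of $j$, so $|s^n(j)|_i/|s^n(j)|$ converges to a limit independent of $j$. The genuine work---and the step I expect to be the main obstacle---is to promote this from letters to all words $w$ and, crucially, to obtain convergence uniform in the \emph{position} of the observation window, not merely along the sparse sequence of natural cutting points $(|s^n(c)|)_n$. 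I would handle it by decomposing an arbitrary window of length $N$ into complete $s^n$-blocks plus two bounded boundary pieces, bounding the boundary contribution by $O(1/N)$, and applying the letter-frequency convergence to the desubstituted block sequence; invoking that primitive substitutions are linearly recurrent makes this uniformity clean.

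Finally, the entropy statement splits. The topological entropy of a subshift equals $\lim_n \frac1n\log p_s(n)$ with $p_s(n)=\#\{w\in\lang_s:|w|=n\}$, so it suffices to show $p_s$ grows subexponentially. After replacing $s$ by a power (which leaves $\Sigma_s$ unchanged) I may assume $\min_a|s(a)|\ge 2$. Every factor $v$ of length $n$ embeds in $s(u)$ for a factor $u$ with $|u|\le n/2+2$, and $v$ is determined by $u$ together with its offset inside $s(u)$ (at most $\max_a|s(a)|$ choices); this yields the recursion $p_s(n)\le \max_a|s(a)|\cdot p_s(\lceil n/2\rceil+2)$, whence $p_s(n)$ is polynomial and $\frac1n\log p_s(n)\to 0$. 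The Kolmogorov entropy then vanishes by the variational principle, since $h_\mu(\sigma)\le h_{\mathrm{top}}(\sigma)=0$ for the unique invariant measure $\mu$.
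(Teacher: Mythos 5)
This theorem is quoted in the paper from \cite{queffelec} and no proof of it appears in the text, so your proposal can only be measured against the standard arguments it is implicitly reproducing. Two of your three parts are correct and are indeed the classical proofs. For minimality: primitivity gives $w \sqsubset s^{n_0+k}(b)$ for every letter $b$, and your block-boundary count is right --- a factor of length $2L$ must wholly contain some block $s^{n_0+k}(d_i)$, hence contains $w$ --- which gives uniform recurrence of $\lang_s$ and therefore minimality of $(\Sigma_s,\sigma)$ (it also shows that the language of $\Sigma_s$ is exactly $\lang_s$, a fact your entropy argument silently uses). For entropy: replacing $s$ by a power leaves $\Sigma_s$ unchanged (this needs primitivity again, to re-embed $s^k(a)$ into some $s^{mk'}(b)$, but it works), the desubstitution bound $p_s(n)\leq \max_a|s(a)|\cdot p_s(\lceil n/2\rceil+2)$ does give polynomial complexity, and the variational principle finishes.

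The gap is in unique ergodicity, at exactly the step you flag as the main obstacle, and your proposed handling of it does not close it. Your window decomposition correctly reduces the problem, uniformly in the position of the window, to showing that $|s^n(b)|_w/|s^n(b)|$ converges as $n\to\infty$ to a limit independent of $b$, for every $w\in\lang_s$. But for $|w|\geq 2$ this is \emph{not} a letter-frequency statement: $|s^n(b)|_w$ is not an entry of $S^n$, and ``applying the letter-frequency convergence to the desubstituted block sequence'' only tells you the frequencies of the block types, not how many occurrences of $w$ each block contributes --- that is the same question pushed one level down, so the reduction as stated is circular. The two standard repairs are: (i) Queffélec's route, introducing the induced substitution on the alphabet of $\ell$-words of $\lang_s$ (with $\ell=|w|$), proving it is again primitive (a genuine lemma, not a formality), and applying Perron--Frobenius to its incidence matrix; or (ii) the perturbed-recursion route: setting $u_n(b):=|s^n(b)|_w$, one has
\begin{equation*}
u_n(b)=\sum_{c\in\A} |s(b)|_c\, u_{n-1}(c) + e_n(b),\qquad |e_n(b)|\leq (\max_a|s(a)|)\,|w|,
\end{equation*}
since only boundedly many occurrences of $w$ straddle the cutting points; writing $u_n=\sum_{j<n}(S^T)^j e_{n-j}$ and using $(S^T)^j=\lambda^jP+O(\theta^j)$ with $\theta<\lambda$ gives convergence of $u_n(b)/\lambda^n$ to a vector proportional to the Perron eigenvector, hence a frequency independent of $b$. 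Either device completes your outline; without one of them the key convergence is asserted rather than proved. Finally, your appeal to linear recurrence is legitimate but heavy machinery: it is the theorem of \cite{durand}, and linearly recurrent minimal subshifts are automatically uniquely ergodic, so invoking it essentially subsumes the statement you are trying to establish rather than supplying the missing step.
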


  In this paper however, we are interested in the action of a particular substitution (chosen amongst a family of substitutions) on points in $\mathcal{A}^\N$. Namely, we wish to know how fast the orbit of a sequence in $\mathcal{A}^\N$ gets close to the compact Cantor set $\Sigma_s$.
  
  In order to do that, we introduce the following object:
  
  \begin{defi}
 Let $\mathcal{A}$ be a finite alphabet and let $s$ be a primitive substitution on this alphabet. Let us define the following function on $\mathcal{A}^\N$:
 $$
 \begin{array}{ccccc}
   \delta_s  &:  &  \mathcal{A}^\N  &  \to      & \N\cup\{+\infty\} \\
             &    &     x            &  \mapsto  & \sup \{n \in \N \  | \  x_{[0...n-1]} \in \lang_s\}.
 \end{array}
 $$
\end{defi}

 We are interested, for any primitive substitution $s$ on $\mathcal{A}$, $x \in \mathcal{A}^\N$ and $k>0$, in $\delta_s(s^k(x))$. Such a quantity is interesting because we have the following equality:
 
 $$
 \forall k \in \N, \  \forall x \in \mathcal{A}^\N, \  \text{dist}(s^k(x),\Sigma_s)=\frac{1}{2^{\delta_s(s^k(x))}}.
 $$
 
 Hence, knowing precisely the behaviour of the sequence $\left( \delta_s s^n(x) \right)_{n\in \N}$ gives precisely the sequence of distances between $s^n(x)$ and the subshift $\Sigma_s$.

\section{The $k$-bonacci substitution}\label{s.kbo}
  
In this section, we denote by $\mathcal{A}_k$ the set $\{0,...,k-1\}$ for any integer $k\geq2$.  We recall that the $k$-bonacci substitution is given by its images on the generators of by:

  $$
  \forall a \in \mathcal{A}_k\backslash\{k-1\},\  s_k(a)=0(a+1)
  $$
  and
  $$
  s_k(k-1)=0 
  $$

In the particular case of $s_3$, the substitution is well know as `Tribonacci substitution'. This substitution was first introduced by Rauzy in \cite{rauzy} and further studied in \cite{arnoux_rauzy}. It was the first example of a substitution studied for its underlying geometric properties (which introduced Rauzy fractals), though these geometric properties do not play any role in the present paper. The case of $k$-bonacci substitutions is an analogous and they are notable for their complexity function: there is exactly $kn+1$ words of length $n$ in the language of the $k$-bonacci substitution.

Remark that these substitution define uniquely ergodic dynamical systems with zero entropy from Theorem \ref{t.ergo}.

\begin{rem}\label{r.reccur_rel}
 For any integer $k\geq2$ we have the following relation:
 $$
 \forall n \in \N, \  s_k^{n+k}(0)=s_k^{n+k-1}(0)s_k^{n+k-2}(0)...s_k^n(0).
 $$
\end{rem}

Let us prove the following proposition:

\begin{prop}\label{p.kbo_prefix}
  For any integer $k \geq 2$, for any sequence $x \in \mathcal{A}_k^\N$ such that $\delta_{s_k}(x)=p \in \N$ and for any positive integer $n$, the maximal prefix of $s_k^n(x)$ in $\lang_{s_k}$ is:
  $$
  s_k^n(x_{[0...p-1]})s_k^{n-1}(0)...s_k(0)0.
  $$
\end{prop}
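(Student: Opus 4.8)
The plan is to prove the statement by induction on $n$, concentrating the whole difficulty in the base case $n=1$. The inductive step I would record as a clean reduction. Assume the statement at rank $n$, and suppose (as the base, to be proved separately) that the rank-$1$ statement holds for every point of $\mathcal{A}_k^\N$ with finite $\delta_{s_k}$. Set $y=s_k^n(x)$; by the rank-$n$ statement its maximal prefix in $\lang_{s_k}$ is $B=s_k^n(x_{[0...p-1]})s_k^{n-1}(0)\cdots s_k(0)0$, so $\delta_{s_k}(y)=|B|<+\infty$ and $y_{[0...|B|-1]}=B$. Applying the rank-$1$ statement to $y$, the maximal prefix of $s_k^{n+1}(x)=s_k(y)$ is $s_k(B)\,0$, and expanding $s_k$ termwise (using $s_k(s_k^j(0))=s_k^{j+1}(0)$, and $s_k(0)=01$ on the final letter $0=s_k^0(0)$) gives
$$
s_k(B)\,0=s_k^{n+1}(x_{[0...p-1]})\,s_k^{n}(0)\,s_k^{n-1}(0)\cdots s_k(0)\,0,
$$
which is exactly the prefix predicted at rank $n+1$. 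Here I would also note that $p=\delta_{s_k}(x)\geq 1$ always, since every single letter lies in $\lang_{s_k}$ by primitivity, so $s_k(x_{[0...p-1]})$ is never empty.

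For the base case I would set $w=x_{[0...p-1]}$, so that $w\in\lang_{s_k}$ while $wx_p\notin\lang_{s_k}$. That $s_k(w)0$ is a prefix of $s_k(x)$ is immediate, since every image $s_k(a)$ begins with $0$; and $s_k(w)0\in\lang_{s_k}$ follows from extendability (Remark \ref{r.factlang}): choosing $b$ with $wb\in\lang_{s_k}$ gives $s_k(w)s_k(b)\in\lang_{s_k}$, whose prefix $s_k(w)0$ lies in $\lang_{s_k}$ by factoriality. The substance is maximality: writing $c$ for the letter of $s_k(x)$ that follows $s_k(w)0$, I must show $s_k(w)0c\notin\lang_{s_k}$.

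The hard part, and the only genuinely combinatorial step, is this maximality. The structural fact I would isolate first is that in the fixed point $\omega=s_k(\omega)$ the letter $0$ occurs \emph{exactly} at the beginnings of the image blocks $s_k(\omega_i)$: indeed $s_k(a)=0(a+1)$ has no further $0$ since $a+1\geq1$, and $s_k(k-1)=0$ is a lone $0$. This is the elementary form of recognizability that here replaces the marked and $2$-full hypotheses of \cite{bhl}. Granting it, any occurrence of $s_k(w)0c$ in $\omega$ starts at a block boundary (its first letter is $0$), and since $s_k(w)=s_k(x_0)\cdots s_k(x_{p-1})$ is a concatenation of blocks each headed by $0$, the occurrence aligns block-by-block and reads as $s_k(\omega_{[j...j+p-1]})$ with $\omega_{[j...j+p-1]}=w$, the explicit trailing $0$ being the first letter of the next block $s_k(\omega_{j+p})$. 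Reading the one extra letter then pins down $\omega_{j+p}$: if $c\neq0$ the block is $0c$ and $\omega_{j+p}=c-1$, while if $c=0$ the block is the lone $0$ and $\omega_{j+p}=k-1$. A short case check on $s_k(x_p)$ (either $s_k(x_p)=0(x_p+1)$, forcing $c=x_p+1$, or $s_k(x_p)=0$, forcing $c=0$) shows $\omega_{j+p}=x_p$ in both cases, whence $wx_p\in\lang_{s_k}$, contradicting $\delta_{s_k}(x)=p$. I expect the delicate point to be justifying the block-by-block alignment rigorously --- that the $0$'s internal to $s_k(w)$ really fall on block boundaries of $\omega$ and that each intervening block equals the corresponding $s_k(x_i)$ --- which is exactly where the $0$-headed structure of the $k$-bonacci substitution does the work that markedness does in \cite{bhl}.
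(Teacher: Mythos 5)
Your proposal is correct and follows essentially the same route as the paper: extendability of $\lang_{s_k}$ gives $s_k(x_{[0...p-1]})0 \in \lang_{s_k}$, maximality rests on unique desubstitution of words headed by $0$ (the paper's one-line claim that a longer prefix in $\lang_{s_k}$ would have a preimage containing $x_{[0...p]}$), and general $n$ follows by iterating the rank-one statement. Your block-alignment argument in the fixed point is exactly the general-$k$ version of the mechanism the paper makes explicit for $k=3$ in the appendix (every $0$ marks the start of an image block, and the letter after a $0$ determines the block), so it fleshes out the paper's terse maximality sentence rather than replacing it with a different idea.
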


  We recall that a proof of this proposition is provided for the case $k=3$ in the appendix.

\begin{proof}
 Let $k$ be an integer greater than $1$ and $x$ a sequence in $\mathcal{A}^\N$ and let us note $p=\delta_{s_k}(x)$.
 
 We have $x_{[0...p-1]} \in \lang_{s_k}$ but $x_{[0...p]} \notin \lang_{s_k}$. As a consequence there exists a letter $a$ in $\mathcal{A}_k$ different from $x_p$ such that $x_{[0...p-1]}a \in \lang_{s_k}$. Hence $s_k(x_{[0...p-1]}a) \in \lang_{s_k}$ which implies that $s_k(x_{[0...p-1]})0 \in \lang_{s_k}$ since evey image of letter starts with $0$. Moreover, the word $s_k(x_{[0...p-1]})0$ is a prefix of $s_k(x)$. It is in fact the maximal prefix in $\lang_{s_k}$ because a longer prefix in $\lang_{s_k}$ has a preimage containing $x_{[0...p]}$ which would be in $\lang_{s_k}$ which is false. 
 
 Iterating the substitution $n$ times ends the proof.
\end{proof}

 Let us state an immediate corollary:
 
 \begin{cor}
  For any integer $k \geq 2$, for any sequence $x \in \mathcal{A}_k^\N \backslash \Sigma_{s_k}$ such that $\delta_{s_k}(x)=p$ and for any positive integer $n$ we have:
  $$
  \delta_{s_k}(s_k^n(x))=\sum_{j\in \mathcal{A}}|s_k^n(j)||x_{[0...p-1]}|_j+\sum_{l=0}^{n-1}|s_k^l(0)|.
  $$
 \end{cor}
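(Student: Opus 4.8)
The plan is to obtain this corollary as a direct length computation from Proposition~\ref{p.kbo_prefix}, exploiting the fact that, by definition, $\delta_{s_k}(y)$ is precisely the length of the longest prefix of $y$ lying in $\lang_{s_k}$. So once we know \emph{which} word is the maximal admissible prefix of $s_k^n(x)$, the value $\delta_{s_k}(s_k^n(x))$ is just the length of that word, and everything reduces to additive bookkeeping on lengths.

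Concretely, I would first note that since $x \in \mathcal{A}_k^\N \backslash \Sigma_{s_k}$, the integer $p=\delta_{s_k}(x)$ is finite, so $x_{[0...p-1]}$ is a genuine finite word and Proposition~\ref{p.kbo_prefix} applies. It tells us that the maximal prefix of $s_k^n(x)$ in $\lang_{s_k}$ is the finite word $s_k^n(x_{[0...p-1]})\,s_k^{n-1}(0)\cdots s_k(0)\,0$, whence $\delta_{s_k}(s_k^n(x))$ equals its length. I would then compute that length additively. The trailing blocks are $s_k^{n-1}(0),\dots,s_k^1(0),s_k^0(0)=0$, contributing $\sum_{l=0}^{n-1}|s_k^l(0)|$. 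For the head, since $s_k^n$ is a morphism, $s_k^n(x_{[0...p-1]})$ is the concatenation of the blocks $s_k^n(x_i)$ for $0\le i\le p-1$; grouping these blocks by the value $j=x_i$ of the corresponding letter gives
$$
|s_k^n(x_{[0...p-1]})| = \sum_{j\in\mathcal{A}} |x_{[0...p-1]}|_j\, |s_k^n(j)|.
$$
Summing the head and tail contributions yields exactly the announced formula.

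There is essentially no serious obstacle here: the statement is pure bookkeeping once Proposition~\ref{p.kbo_prefix} is in hand. The only point deserving a little care is the indexing of the tail sum, namely making sure the final letter $0=s_k^0(0)$ of length $1$ is correctly accounted for by the term $l=0$ (so that the tail runs over $l=0,\dots,n-1$ rather than $l=1,\dots,n$); a quick check on small $n$ confirms the indexing.
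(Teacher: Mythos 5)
Your proof is correct and is exactly the argument the paper intends: the paper states this as an ``immediate corollary'' of Proposition~\ref{p.kbo_prefix} without further detail, and your length bookkeeping (head blocks grouped by letter, tail blocks $s_k^l(0)$ for $l=0,\dots,n-1$) is precisely the omitted computation. The observation that $x\notin\Sigma_{s_k}$ forces $p$ to be finite is the right point to flag, and your indexing of the tail sum is correct.
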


 Let us now study the effect of the shift action on the function $\delta_{s_k}$ for any integer $k$ greater than $1$.
 
 First, we prove the following proposition:
 
 \begin{prop}\label{p.recog_kbo}
  For any integer $k$ greater than $1$, for any integer $n\geq k$, for any non negative integer $d$,
  $$
  \omega^{s_k}_{[d,...,d+|s_k^n(0)|-1]}=s_k^n(0)\Leftrightarrow d\in D_{s_k}^n,
  $$
  where $\omega^{s_k}$ denotes the unique fixed point of the $k$-bonacci substitution.
 \end{prop}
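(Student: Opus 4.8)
The plan is to isolate a purely combinatorial lemma about the images $s_k^n(a)$, then treat the two implications separately, the forward one ($\Leftarrow$) being easy and the converse ($\Rightarrow$) being the genuine recognizability statement. Throughout write $P_m:=s_k^m(0)$, $L_m:=|P_m|$ and $\ell_n(a):=|s_k^n(a)|$. First I would iterate Remark \ref{r.reccur_rel} to obtain, for every $n\ge k$ and every letter $a\in\A_k$, the factorisations
$$
s_k^n(a)=P_{n-1}P_{n-2}\cdots P_{n-k+a},\qquad s_k^n(0)=P_{n-1}P_{n-2}\cdots P_{n-k},
$$
so that $s_k^n(a)$ is exactly the concatenation of the first $k-a$ of the $k$ super-factors of $P_n$. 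Two consequences are needed: every $s_k^n(a)$ is a prefix of $P_n=s_k^n(0)$, and $L_n=\ell_n(0)>\ell_n(1)>\cdots>\ell_n(k-1)>0$ (strict, using $n\ge k$). This is precisely where the hypothesis $n\ge k$ enters and where the absence of left-marking makes the analysis depart from \cite{bhl}. I would also record $P_n=s_k(P_{n-1})$, whence $P_n$ begins with $P_{n-1}$ and, inductively, with $P_1=01$.

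For the implication $\Leftarrow$ I would prove the auxiliary claim that for every $z\in\Sigma_{s_k}$ and every $n\ge k$ the sequence $s_k^n(z)$ begins with $P_n$. The induction step is immediate: if $s_k^n(z)$ begins with $P_n$, then applying the morphism $s_k$ shows $s_k^{n+1}(z)=s_k(s_k^n(z))$ begins with $s_k(P_n)=P_{n+1}$. The base case $n=k$ is done by hand: the first block $s_k^k(z_0)=P_{k-1}\cdots P_{z_0}$ is a prefix of $P_k=P_{k-1}\cdots P_0$ by the combinatorial lemma, and its missing suffix $P_{z_0-1}\cdots P_0$ is itself a prefix of $P_{k-1}$, hence supplied by the next block $s_k^k(z_1)$; this reduces to the telescoping identity that $P_mP_{m-1}\cdots P_0$ is a prefix of $P_{m+1}$, verified by repeatedly unfolding $s_k(j)=0(j+1)$. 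Finally, if $d\in D^n_{s_k}$ then $d=|s_k^n(\omega^{s_k}_{[0\ldots m-1]})|$ for some $m$ and $\omega^{s_k}_{[d\ldots]}=s_k^n(\sigma^m\omega^{s_k})$; applying the auxiliary claim to $z=\sigma^m\omega^{s_k}\in\Sigma_{s_k}$ yields $\omega^{s_k}_{[d\ldots d+L_n-1]}=P_n=s_k^n(0)$.

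For the converse $\Rightarrow$ I would first use the clean characterisation special to $k$-bonacci: a position $d$ is a level-one cutting point, $d\in D^1_{s_k}$, if and only if $\omega^{s_k}_d=0$ (every block $s_k(a)\in\{0,01,\ldots,0(k-1)\}$ begins with $0$, and every non-initial letter of a block is $\ge1$). I would then argue by induction on $n\ge k$. Suppose $P_n$ occurs at $d$. Since $P_n$ begins with $01$, the block of $\omega^{s_k}$ at $d$ must be $s_k(0)=01$ (the length-one block $0$ would force $\omega^{s_k}_{d+1}=0$, and a block $0a$ with $a\neq1$ is excluded), so $d\in D^1_{s_k}$ with preimage letter $0$; writing $d=|s_k(\omega^{s_k}_{[0\ldots j-1]})|$ gives $\omega^{s_k}_{[d\ldots]}=s_k(\sigma^j\omega^{s_k})$ and this sequence begins with $P_n=s_k(P_{n-1})$. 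Desubstituting, that is reading $\omega^{s_k}_{[d\ldots]}$ block by block (each block delimited by its $0$'s), should recover that $\sigma^j\omega^{s_k}$ begins with $P_{n-1}$; the induction hypothesis at level $n-1\ge k$ then gives $j\in D^{n-1}_{s_k}$, and since $D^n_{s_k}=\{|s_k(\omega^{s_k}_{[0\ldots m-1]})|:m\in D^{n-1}_{s_k}\}$ (writing $s_k^n=s_k\circ s_k^{n-1}$ and using $\omega^{s_k}=s_k^{n-1}(\omega^{s_k})$, refining Remark \ref{r.circularpoint}) this upgrades to $d\in D^n_{s_k}$. The base case $n=k$ cannot be reduced this way and must be proved directly as an exact recognizability statement for $P_k$.

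The step I expect to be the main obstacle is the desubstitution in the inductive move, and it is caused precisely by the failure of left-marking: the block $0=s_k(k-1)$ is a prefix of every block $0a$, so $\{0,01,\ldots,0(k-1)\}$ is not a prefix code and one extra look-ahead symbol is needed to decide the last block of the matched prefix $s_k(P_{n-1})$. The ambiguity is genuine exactly when $P_{n-1}$ ends in $k-1$, equivalently $n\equiv0\pmod k$ (the last letter of $P_m$ is $m\bmod k$), where the final $0$ of $s_k(P_{n-1})$ could a priori begin a block $0a$ of $\omega^{s_k}_{[d\ldots]}$ rather than be a full block $0$. Ruling out such straddling occurrences is where the real work lies; I would handle it through the explicit combinatorics of the $k$-bonacci words together with the recognizability techniques of \cite{mosse_reco}, the hypothesis $n\ge k$ being exactly what makes $P_n$ long enough for the synchronisation to succeed, and the same explicit analysis supplying the direct base case $n=k$.
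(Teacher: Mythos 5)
Your combinatorial setup and the $\Leftarrow$ implication are fine and essentially coincide with the paper's argument (the paper phrases it as a two-case analysis of the $n$-block at $d$ rather than via your auxiliary claim on all of $\Sigma_{s_k}$, but the identities used are the same). The converse is where the proposal breaks down, and the two places you defer --- the desubstitution step and the base case $n=k$ --- are exactly the content of the proposition, not technical leftovers. Write $P_m:=s_k^m(0)$. Your inductive move needs: if $s_k(\sigma^j\omega^{s_k})$ begins with $s_k(P_{n-1})$, then $\sigma^j\omega^{s_k}$ begins with $P_{n-1}$. The straddling you worry about is not a hypothetical: for Tribonacci, $\omega^{s_3}$ begins with $010201001020101\cdots$, so $P_3=0102010$ occurs at position $d=7$ followed by the letter $1$; there $\omega^{s_3}_{[7\ldots]}=s_3(\sigma^4\omega^{s_3})$ begins with $P_3=s_3(P_2)$, yet $\sigma^4\omega^{s_3}$ begins with $0100\cdots$, not with $P_2=0102$. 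So the desubstitution claim is simply false at $n=k$, and for $n>k$ ruling out straddling is equivalent to showing that every occurrence of $P_n$ in $\omega^{s_k}$ is followed by $0$ --- which is, in essence, the synchronization statement you are trying to prove at level $n$. As organized, the induction is therefore circular at its key point, and the appeal to the techniques of \cite{mosse_reco} does not replace an argument; likewise the base case, which you only announce, is precisely where straddling genuinely occurs and so certainly needs a different idea.

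The paper's proof shows that desubstitution can be avoided altogether, which is the real divergence between the two approaches. Its induction goes upward: an occurrence of $P_{n+1}=P_ns_k^n(1)$ at $d$ contains an occurrence of the prefix $P_n$ at the same position $d$ of $\omega^{s_k}$, so the induction hypothesis applies directly in $\omega^{s_k}$ and gives $d\in D^n_{s_k}$; one then asks whether $e=d+|P_n|$ lies in $D^n_{s_k}$. If yes, then since an $(n+1)$-block consists of one or two $n$-blocks, one of $d,e$ lies in $D^{n+1}_{s_k}$, and $e\in D^{n+1}_{s_k}$ forces $d\in D^{n+1}_{s_k}$ because $\omega^{s_k}_{[d\ldots e-1]}=s_k^n(0)=s_k^{n+1}(k-1)$. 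If no, the $n$-block at $d$ is some $s_k^n(a)$ with $a\neq0$, and applying the already-proved $\Leftarrow$ direction at the next cut point $f=d+|s_k^n(a)|$ would force $s_k^{n-(k-a)-1}(0)\cdots s_k^{n-k}(0)s_k^{n-1}(0)\cdots s_k^{n-(k-a)}(0)=s_k^n(0)$, impossible because the last letters disagree ($n-(k-a)\not\equiv n \pmod{k}$): it is right-markedness, not left decipherability, that does the synchronizing. The base case $n=k$ uses the identity $s_k^k(0)=s_{k-1}^{k-1}(0)(k-1)s_{k-1}^{k-1}(0)$: every $k$-block contains exactly one letter $k-1$, always at offset $|s_{k-1}^{k-1}(0)|$ from its start, and $s_k^k(0)$ has maximal length among $k$-blocks, which pins every occurrence to a point of $D^k_{s_k}$. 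To complete your proof you would have to replace the desubstitution step and the missing base case by arguments of this kind.
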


 \begin{rem}
  This proposition is an efficient formulation Theorem 3.1bis of \cite{mosse_reco} for the specific case of the powers of $k$-bonacci substitution. It gives a bound for the recognizability constant and states which words mark points in $D_n$. Indeed it states that, for any $n \geq k$, the only words of length $|s_k^n(0)|$ that can be seen as an $n^{th}$ image of letter in the fixed point of $s_k$ always appear as prefixes of $|s_k^n(0)|$. This allows to ``cut'' the fixed point of $s_k$ in blocks which are $n^{th}$ images of letters by looking only at words  of length $|s_k^n(0)|$ which makes the constant of Theorem 3.1bis of \cite{mosse_reco} completely explicit. This is however completely dependant on the choice of $k$-bonacci substitution.
 \end{rem}

  We recall that a proof of this proposition is provided for the case $k=3$ in the appendix.

 \begin{proof}
  First, let us prove that for any integer $k$ greater than $1$, for any integer $n\geq k$ and for any integer $d$,
  $$
  \omega^{s_k}_{[d,...,d+|s_k^n(0)|-1]}=s_k^n(0)\Leftarrow d\in D_{s_k}^n.
  $$
  If $d$ is in $D_{s_k}^n$ then we have two cases to treat:
  \begin{itemize}
   \item Either $\omega^{s_k}_{[d,...,d+|s_k^n(0)|-1]}=s_k^n(0)$ in which case the equality is trivialy verified.
   \item Or there is a letter $a\in \mathcal{A}_k\backslash \{0\}$ such that $\omega^{s_k}_{[d,...,d+|s_k^n(0)|-1]}$ is prefix of $s_k^n(a)s_k^n(0)$. Let us notice that:
   $$
   s_k^n(a)=s_k^{n-1}(0)...s_k^{n-(k-a)}(0).
   $$
   Moreover, $s_k^{n-(k-a)-1}(0)...s_k^{n-k}(0)$ is a prefix of $s_k^n(0)$. Indeed the word $s_k^{n-(k-a)}(0)$ is a prefix of $s_k^n(0)$. This is because $0$ is a prefix of $s_k(0)$.
   Using Remark \ref{r.reccur_rel}, we have that $s_k^n(0)$ is a prefix of $s_k^n(a)s_k^n(0)$. Hence:
   $$
   \omega^{s_k}_{[d,...,d+|s_k^n(0)|-1]}=s_k^n(0).
   $$
   \end{itemize}

   Let us now prove the converse statement:
   
   For any integer $k$ greater than $1$, for any integer $n\geq k$ and for any integer $d$,
  $$
  \omega^{s_k}_{[d,...,d+|s_k^n(0)|-1]}=s_k^n(0)\Rightarrow d\in D_{s_k}^n.
  $$
  We prove this implication by induction on $n$.
  
  \textit{Claim:}  
  The property holds for $n=k$. Namely, every occurrence of $s_k^k(0)$ in $\omega^{s_k}$ has starting position in $D_{s_k}^k$.
  First, let us state that:
  $$
  s_k^k(0)=s_{k-1}^{k-1}(0)(k-1)s_{k-1}^{k-1}(0)
  $$
  since
  $$
  s_k^k(0)=s_{k}^{k-1}(0)s_{k}^{k-1}(1).
  $$
  It is then easy to check that 
  $$
  s_{k}^{k-1}(0)=s_{k-1}^{k-1}(0)(k-1)
  $$
  and
  $$
  s_{k}^{k-1}(1)=s_{k-1}^{k-1}(0).
  $$
  
  Let us also remark that for any $a \in \mathcal{A}_k$, $s_k^k(a)$ starts with $ s_{k}^{k-1}(0)$. Hence $s_{k-1}^{k-1}(0)(k-1)$ is prefix of every $k^{\textit{th}}$ image of letter.
  Moreover, it is clear from the equality $s_k^k(0)=s_{k-1}^{k-1}(0)(k-1)s_{k-1}^{k-1}(0)$ that every $k^{\textit{th}}$ image of letter contains one and only one occurrence of the letter $k-1$ since $s_{k-1}$ is defined on $\{0,...,k-2\}^*$.
  
  Finally, $s_k^k(0)$ being the image with maximal length, every occurrence of this word has to contain at least one point in $D_{s_k}^k$. The previous remarks imply that we only see this word in $\omega^{s_k}$ starting from a point in $D_{s_k}^k$, thus completing the initialisation of the induction.
  
  Let us now assume that this property is true for a fixed integer $n\geq k$. We wish to prove that it is still true for $n+1$. We write $s_k^{n+1}(0)$ in the following way:
  \bigskip
  \begin{center}
     \begin{tikzpicture}
 \draw (0,0)--(8,0);
 \draw (0,0.2)--(0,-0.2);
 \draw (5,0.2)--(5,-0.2);
 \draw (8,0.2)--(8,-0.2);
 \draw (2.5,0) node [above] {$s_k^n(0)$};
 \draw (6.5,0) node [above] {$s_k^n(1)$};
 \draw (0,-0.2) node [below] {$d$};
 \draw (5,-0.2) node [below] {$e$};
\end{tikzpicture}
\end{center}

By induction hypothesis, $d \in D_{s_k}^n$. Let us now assume that $e$ is also in $D_{s_k}^n$. Then $d$ and $e$ are necessarily two consecutive points of $D_{s_k}^n$. Since images of letters by $s_k$ are of length one or two, two consecutive points of $D_{s_k}^n$ have either one of them or two of them in $D_{s_k}^{n+1}$. Hence $d \in D_{s_k}^{n+1}$ or $e \in D_{s_k}^{n+1}$. Notice that if $e \in D_{s_k}^{n+1}$, writing $s_k^n(0)=s_k^{n+1}(k-1)$ is enough to prove that $d \in D_{s_k}^{n+1}$.

If however $e$ is not in $D_{s_k}^n$, then there exists a letter $a$ in $\mathcal{A}_k$ different from $0$ such that we can write $s_k^{n+1}(0)$ in the following way:
$$
s_k^{n+1}(0)=s_k^{n}(a)ws_k^n(1).
$$

We represent this writing on the following picture:
   \begin{center}
     \begin{tikzpicture}
 \draw (0,0)--(8,0);
 \draw (0,0.2)--(0,-0.2);
 \draw (3,0.2)--(3,-0.2);
 \draw (5,0.2)--(5,-0.2);
 \draw (8,0.2)--(8,-0.2);
 \draw (1.5,0) node [above] {$s_k^n(a)$};
 \draw (4,0) node [above] {$w$};
 \draw (6.5,0) node [above] {$s_k^n(1)$};
 \draw (0,-0.2) node [below] {$d$};
  \draw (3,-0.2) node [below] {$f$};
 \draw (5,-0.2) node [below] {$e$};
\end{tikzpicture}
  \end{center}
  with $f$ in $D_{s_k}^n$. Moreover, $s_k^n(0)=s_k^{n-1}(0)...s_k^{n-k}(0)$ and it is easily seen that $s_k^n(a)=s_k^{n-1}(0)...s_k^{n-(k-a)}(0)$ so we deduce that $w=s_k^{n-(k-a)-1}(0)...s_k^{n-k}(0)$. 
  Now let us remark that $s_k^{n-1}(0)...s_k^{n-(k-a)}(0)$ is prefix of $s_k^n(1)$ whenever $a\neq0$ and $n\geq k$.
  
  Since $f$ is in $D_{s_k}^n$, we should have 
  $$
  s_k^{n-(k-a)-1}(0)...s_k^{n-k}(0)s_k^{n-1}(0)...s_k^{n-(k-a)}(0)=s_k^n(0)
  $$
  using the converse implication of Property \ref{p.recog_kbo} which is already proved. This is not possible because the last letter is different, since $n-(k-a) \not\equiv n [k]$ if $a\in \mathcal{A}_k\backslash\{0\}$.
  
  The only possible case being $e \in D_{s_k}^n$ which necessarily implies that $d \in D_{s_k}^{n+1}$, the proof is complete.
 \end{proof}
 
 We can now prove the following property:
 \begin{prop}\label{thm.length_kbo}
   For any integer $k\geq2$, for any sequence $x \in \mathcal{A}^\N$ such that $\delta_{s_k}(x)=p$, we have:
   $$
   \forall n\geq k, \  \forall j<|s_k^n(x_0)|, \  \delta_{s_k}(\sigma^j(s_k^n(x))=\sum_{l\in \mathcal{A}}|s_k^n(l)||x_{[0...p-1]}|_j+\sum_{l=0}^{n-1}|s_k^l(0)|-j.
   $$
 \end{prop}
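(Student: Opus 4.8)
The plan is to reduce the statement to the $\sigma^0$ case and to show that each of the first $|s_k^n(x_0)|$ applications of the shift removes exactly one letter from the maximal prefix. Write $y=s_k^n(x)$ and set $N=\delta_{s_k}(y)=\sum_{l\in\mathcal{A}}|s_k^n(l)|\,|x_{[0...p-1]}|_l+\sum_{l=0}^{n-1}|s_k^l(0)|$, the value given by the corollary to Proposition \ref{p.kbo_prefix}. That proposition also describes the maximal prefix $w=y_{[0...N-1]}=s_k^n(x_{[0...p-1]})s_k^{n-1}(0)\cdots s_k(0)0$, and by maximality $w\,y_N\notin\lang_{s_k}$, where $y_N$ is the letter of index $N$. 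Since $\lang_{s_k}$ is factorial, $y_{[j...N-1]}\in\lang_{s_k}$ for every $j$, so $\delta_{s_k}(\sigma^j y)\ge N-j$. Hence it suffices to prove that, for $0\le j<|s_k^n(x_0)|$, the word $B:=y_{[j...N]}$ is \emph{not} in $\lang_{s_k}$: this forces $\delta_{s_k}(\sigma^j y)=N-j$, which is exactly the announced formula.

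The key structural observation is that, for $n\ge k$, the tail $s_k^{n-1}(0)\cdots s_k(0)0$ begins with its first $k$ factors $s_k^{n-1}(0)\cdots s_k^{n-k}(0)$, which by Remark \ref{r.reccur_rel} equal $s_k^n(0)$. Thus $w$ contains a copy of $s_k^n(0)$ starting at position $M:=|s_k^n(x_{[0...p-1]})|=\sum_{l}|s_k^n(l)|\,|x_{[0...p-1]}|_l$, and the same copy sits at internal position $M-j$ inside $B$; note $M\ge|s_k^n(x_0)|>j$, so this position is strictly positive and the copy lies strictly inside $B$.

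Now I argue by contradiction. Suppose $B\in\lang_{s_k}$; by minimality of $(\Sigma_{s_k},\sigma)$ we may fix an occurrence $\omega^{s_k}_{[q...q+N-j]}=B$ with as much left context as we wish. By the structural observation its internal copy of $s_k^n(0)$ occurs at position $c:=q+M-j$, so Proposition \ref{p.recog_kbo} gives $c\in D_{s_k}^n$: the occurrence of $B$ is synchronised with a cutting point of the level-$n$ desubstitution. It then remains to transport this synchronisation leftwards. Reading the level-$n$ block decomposition of $\omega^{s_k}$ backwards from $c$ — whose block letters are those of $\omega^{s_k}$ itself — one recovers successively the blocks $s_k^n(x_{p-1}),\ldots,s_k^n(x_1)$ and finally the full block $s_k^n(x_0)$, of which $y_{[j...|s_k^n(x_0)|-1]}$ is a nonempty suffix. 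Consequently the occurrence of $B$ extends to the left by the length-$j$ prefix of $s_k^n(x_0)$, yielding
\[
\omega^{s_k}_{[q-j...q+N-j]}=s_k^n(x_0)\cdots s_k^n(x_{p-1})s_k^{n-1}(0)\cdots s_k(0)0\,y_N=w\,y_N .
\]
This contradicts $w\,y_N\notin\lang_{s_k}$, so $B\notin\lang_{s_k}$ and the formula follows.

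I expect the backward transport step to be the main obstacle: it is a recognisability (unique desubstitution) statement for arbitrary level-$n$ images, not merely for $s_k^n(0)$. The clean way to secure it is to combine Proposition \ref{p.recog_kbo} with the deterministic description of $D_{s_k}^n$: once a single cutting point is located, the adjacent blocks are forced, because the block sequence of $\omega^{s_k}$ is governed by $\omega^{s_k}$ and level-$n$ images cannot overlap in a shifted way (Mossé recognisability, \cite{mosse_reco}). Some care is needed to use the hypothesis $j<|s_k^n(x_0)|$ precisely where required — to guarantee that the leftmost recovered block is $s_k^n(x_0)$ and not an earlier one — and to confirm that the internal copy of $s_k^n(0)$ indeed lies strictly inside $B$, both of which are ensured by $M\ge|s_k^n(x_0)|>j$.
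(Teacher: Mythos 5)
Your proposal takes the same route as the paper's own proof: the reduction to showing $B=y_{[j\ldots N]}\notin\lang_{s_k}$, the observation via Remark~\ref{r.reccur_rel} that the tail $s_k^{n-1}(0)\cdots s_k(0)0$ begins with $s_k^n(0)$ when $n\geq k$, and the use of Proposition~\ref{p.recog_kbo} to place the internal copy of $s_k^n(0)$ at a cutting point $c\in D_{s_k}^n$ are exactly the paper's steps, and they are correct.

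The gap is the step you yourself flag as the main obstacle: the leftward transport is asserted rather than proved, and neither justification you offer is valid. The principle ``level-$n$ images cannot overlap in a shifted way'' is false for letters other than $0$: in the Tribonacci case, $s_3^n(2)=s_3^{n-1}(0)$, and one can check that this word occurs at the starting point of every level-$(n-1)$ block $s_3^{n-1}(2)$ of $\omega^{s_3}$; such points lie in $D_{s_3}^{n-1}$ but never in $D_{s_3}^{n}$, since the letter $2$ never begins a level-$1$ block. Proposition~\ref{p.recog_kbo} is genuinely special to the longest image $s_k^n(0)$, as the remark following it emphasizes. Nor can Moss\'e's theorem be invoked as a black box: it identifies cutting points from two-sided contexts of length up to the recognizability constant, and precisely at the left edge of the occurrence of $B$ no left context is known (having ``as much left context as we wish'' does not help when you do not know what that context reads). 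What closes the gap in the paper --- and what is missing from your argument --- is right-markedness: $a\mapsto(\text{last letter of }s_k(a))$ is the cycle $a\mapsto a+1 \bmod k$, a bijection of $\A_k$, hence $a\mapsto(\text{last letter of }s_k^n(a))$ is also a bijection. Therefore the single letter read immediately to the left of a known cutting point determines, as a word, the level-$n$ block ending there --- hence also its length and a new cutting point at its left end. Iterating from $c$ recovers $s_k^n(x_{p-1}),\ldots,s_k^n(x_1)$, each inside the occurrence of $B$, and the hypothesis $j<|s_k^n(x_0)|$ guarantees one more letter (the last letter of $s_k^n(x_0)$) is readable to the left of the last recovered cutting point, forcing a full block $s_k^n(x_0)$ there and extending the occurrence of $B$ to an occurrence of $w\,y_N$, contradicting Proposition~\ref{p.kbo_prefix}. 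With this one elementary ingredient inserted, your argument becomes precisely the paper's proof.
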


 \begin{proof}
   First we recall that the maximal prefix of $s_k^n(x)$ in the language is $s_k^n(x_{[0...p-1]})s_k^{n-1}(0)...0$ if $\delta_{s_k}(x)=p$ (see Proposition \ref{p.kbo_prefix}). 
 
 Moreover, it is obvious that: 
 $$
 \forall n \geq k, \  \forall j < |s_k^n(x_0)|, \  \delta_{s_k}(\sigma^js_k^n(x))\geq\delta_{s_k}(s_k^n(x))-j.
 $$
 
 Moreover we remind that, since $j < |s_k^n(x_0)|$, the action of the shift does not completely erase the image of the first letter. Let us represent $\sigma^js_k^n(x)$ by a semiline:
 
 \bigskip
  \begin{center}
     \begin{tikzpicture}
 \draw [dashed] (0,0)--(2,0);
 \draw  (2,0)--(11,0);
 \draw (0,0.2)--(0,-0.2);
 \draw (2,0.2)--(2,-0.2);
 \draw (3,0.2)--(3,-0.2);
 \draw (7,0.2)--(7,-0.2);
 \draw (10,0.2)--(10,-0.2);
 \draw[decorate,decoration={brace, amplitude=4pt, raise=0.4cm}](0.1,0)--(2.9,0);
 \draw[decorate,decoration={brace, amplitude=4pt, raise=0.4cm}](3.1,0)--(6.9,0); 
 \draw[decorate,decoration={brace, amplitude=4pt, raise=0.4cm}](7.1,0)--(9.9,0);
 \draw [<->] (0,-.6)--(2,-.6);
 \draw (1,-.6) node [below] {$j$};
 \draw (5,.5) node [above] {$s_k^n(x_{[1...p-1]})$};
 \draw (8.5,.5) node [above] {$s_k^{n-1}(0)...0$};
 \draw (1.5,.5) node [above] {$s_k^{n}(x_0)$};
 \draw (10.5,.5) node [above] {$a...$};
 \draw (3,-.3) node [below] {$d$};
 \draw (7,-.3) node [below] {$f$};
\end{tikzpicture}
  \end{center}
  where $a \in \mathcal{A}_k$, and $d$ and $f$ are a pair of integers such that
$$
\omega^{s_k}_{[d..d+f-1]}\omega^{s_k}_{[f...f+|s_k^{n-1}(0)...0|-1]}=s_k^n(x_{[1...p-1]})s_k^{n-1}(0)...0.
$$
It is enough to remark that for any $n\geq k$ we have:
$$
s_k^{n-1}(0)...0=s_k^{n-1}(0)...s_k^{n-k}(0)s_{k}^{n-k-1}(0)...0
$$
and recall that 
$$
s_k^{n-1}(0)...s_k^{n-k}(0)=s_k^{n}(0)
$$
so we have $s_k^{n-1}(0)...0=s_k^{n}(0)s_k^{n-k-1}(0)...0$ and we can deduce from Proposition \ref{p.recog_kbo} that $f$ is in $D_{s_k}^n$ (regardless of the choice of $f$).

Let $u_0...u_l:=s_k^n(x_{[0...p-1]})s_k^{n-1}(0)...0$. Let $w=u_j...u_l$.
Let us assume that $wa \in \lang_{s_k}$ which is equivalent to the assumption that $\delta_{s_k}(\sigma^js_k^n(x))>\delta_{s_k}(s_k^n(x))-j$.

The $k$-bonacci substitution being right-marked, and since $f$ is in $D^n$, it is easy to remark that $d$ is also in $D^n$. Then, since $j<|s_k^n(x_0)|$, one can always read on the left of $d$ the last letter of $s_k^n(x_0)$. The substitution $s_k$ (and thus $s_k^n$) being right-marked, we conclude that every occurrence of $w$ in $\omega$ is as a subword of $s_k^n(x_{[0...p-1]})s_k^{n-1}(0)...0$. Hence if $wa$ were in $\lang_{s_k}$, so would $s_k^n(x_{[0...p-1]})s_k^{n-1}(0)...0a$ which is a contradiction with Proposition \ref{p.kbo_prefix}.
 \end{proof}

\section{Fixed point of the renormalization operator}\label{s.potential}

\subsection{For a potential of the form $\frac{1}{\delta_{s_k}(x)}$}

In all this section, $k$ denotes a fixed integer greater than 2. We remind that the renormalization operator $R$ is defined by:

$$
\begin{array}{ccccc}
  R&:&\mathcal{C}(\mathcal{A}_k^\N,\R)&\rightarrow&\mathcal{C}(\mathcal{A}_k^\N,\R)\\
   & & V(x)                              &\mapsto & \displaystyle\sum_{j=0}^{|s_k(x_0)|-1} V\circ \sigma^j \circ s_k(x)
 \end{array}
$$

\begin{lemma}\label{l.ruelle_power}
 For any integer $n$ and for any configuration $x$ in $\mathcal{A}_k^\N$:
 $$
 R^n(V)(x)=\sum_{j=0}^{|s_k^n(x_0)|-1} V\circ \sigma^j \circ s_k^n(x)
 $$

\end{lemma}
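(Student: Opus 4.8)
The claim is that iterating the renormalization operator $R$ exactly $n$ times produces a sum over the full block of length $|s_k^n(x_0)|$ of the potential evaluated along the orbit of $s_k^n(x)$. The plan is to proceed by induction on $n$, with the base case $n=1$ being precisely the definition of $R$.

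So suppose the formula holds for some fixed $n$; I want to establish it for $n+1$. The key is to unwind $R^{n+1}(V)(x) = R(R^n V)(x)$ using the definition of $R$ applied to the potential $W := R^n V$, which gives
$$
R^{n+1}(V)(x) = \sum_{i=0}^{|s_k(x_0)|-1} W\bigl(\sigma^i s_k(x)\bigr) = \sum_{i=0}^{|s_k(x_0)|-1} (R^n V)\bigl(\sigma^i s_k(x)\bigr).
$$
Now I would apply the induction hypothesis to each configuration $y = \sigma^i s_k(x)$. Writing $y_0$ for the first letter of $y$, the induction hypothesis yields $(R^n V)(y) = \sum_{j=0}^{|s_k^n(y_0)|-1} V(\sigma^j s_k^n(y))$. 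The main bookkeeping obstacle is therefore to verify that, as $i$ ranges over the image $s_k(x_0)$ and $j$ ranges over the corresponding image $s_k^n(y_0)$, the composite index $j$ applied to $\sigma^j s_k^n(\sigma^i s_k(x))$ sweeps out exactly the initial segment $0,1,\dots,|s_k^{n+1}(x_0)|-1$ of shifts applied to $s_k^{n+1}(x)$, each exactly once. This rests on two facts: first, the morphism property $s_k^n \circ s_k = s_k^{n+1}$, so that $s_k^n(\sigma^i s_k(x))$ relates to $s_k^{n+1}(x)$ via a shift governed by the image lengths; second, that $s_k^n$ is a morphism, so it maps the shift $\sigma^i$ on $s_k(x)$ to a shift by $|s_k^n((s_k(x))_{[0\ldots i-1]})|$ on $s_k^{n+1}(x)$.

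More precisely, I expect the crucial identity to be $\sigma^j \circ s_k^n \circ \sigma^i \circ s_k = \sigma^{m(i,j)} \circ s_k^{n+1}$ on the relevant configurations, where $m(i,j) = |s_k^n\bigl((s_k(x))_{[0\ldots i-1]}\bigr)| + j$; since the letter $y_0 = (s_k(x))_i$, the inner sum over $j<|s_k^n(y_0)|$ together with the outer sum over $i<|s_k(x_0)|$ telescopes the lengths so that $m(i,j)$ enumerates each value in $\{0,\dots,|s_k^{n+1}(x_0)|-1\}$ exactly once. The telescoping uses that $|s_k^{n+1}(x_0)| = |s_k^n(s_k(x_0))| = \sum_{i=0}^{|s_k(x_0)|-1} |s_k^n((s_k(x_0))_i)|$, which is just additivity of length under a morphism. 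I anticipate that the genuine work—the only place requiring care rather than routine symbol-pushing—is pinning down this reindexing cleanly, i.e.\ checking that $\sigma^i$ acting before $s_k^n$ is absorbed into a left-shift on $s_k^{n+1}(x)$ by exactly the length of the prefix consumed, so that no index is repeated or skipped. Once this reindexing is verified, resumming over the single index $m$ from $0$ to $|s_k^{n+1}(x_0)|-1$ recovers the desired expression $\sum_{m=0}^{|s_k^{n+1}(x_0)|-1} V(\sigma^m s_k^{n+1}(x))$, completing the induction.
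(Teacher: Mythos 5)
Your proof is correct: the induction on $n$, with the reindexing identity $\sigma^j\circ s_k^n\circ\sigma^i\circ s_k=\sigma^{m(i,j)}\circ s_k^{n+1}$ for $m(i,j)=|s_k^n\bigl((s_k(x))_{[0\ldots i-1]}\bigr)|+j$ and the length-additivity $|s_k^{n+1}(x_0)|=\sum_{i<|s_k(x_0)|}|s_k^n((s_k(x))_i)|$, is exactly the argument needed. The paper itself gives no proof but defers to an induction in \cite{bhl} valid for any substitution, and your argument is essentially that same induction, written out in full.
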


 This is proved by induction in \cite{bhl} for any substitution.

 First, we wish to understand the asymptotic behaviour of $R^n(V)$ as $n$ goes to infinity for the potential $V_0$ defined by:
 $$
 \begin{array}{ccccc}
  V_0 &: & \mathcal{A}_k^\N & \rightarrow & \R \\
    &   & x              & \mapsto     & \frac{1}{\delta_{s_k}(x)}.
 \end{array}
$$

\begin{rem}\label{r.perron}
 From Perron Frobenius Theorem, we know that for any letter $l$ in $\mathcal{A}_k$, there exists a real sequence $r_l(n)$ and a positive real number $\gamma_l$ such that $|s_k^n(l)|= \gamma_l\lambda^n +r_l(n)$ where $\lambda$ is the Perron Frobenius eigenvalue of the incidence matrix of $s_k$ and $r_l(n)$ satisfies:
 $$
 \forall n \in \N, \  |r_l(n)|\leq C_l\theta^n
 $$
 with $C_l>0$ and $0<\theta<\lambda$.
\end{rem}

Notice that $\lambda$ is the dominant root of the polynomial $X^k-\displaystyle\sum_{j=0}^{k-1}X^j$.
 
 \begin{rem}\label{r.eigenvector}
   Remark also that the vector $(\gamma_0,...,\gamma_{k-1})$ is a multiple of the left eigenvector $v=(v_0,...,v_{k-1})$ associated to the eigenvalue $\lambda$.
 We can write:
 $$
 \forall l \in \mathcal{A},\, v_l=\frac{1}{\lambda^{k-1-l}}\sum_{j=0}^{k-1-l}\lambda^j.
 $$
 \end{rem}

 From this formula, notice that we chose an eigenvector such that $v_0=\lambda$

We prove the following:

\begin{thm}\label{t.renorm_baby_case}
For any $k\in \N$, $k\geq2$, there exists $U \in C(\mathcal{A}_k^\N,\R_+)$, fixed point for the  renormalization operator associated to the $k$-bonacci substitution, defined by:
 $$
 \forall x \in \mathcal{A}_k^\N,\, U(x)=\log\left(1+\frac{v_{x_0}}{\frac{\lambda}{\lambda-1}+\sum_{l\in\mathcal{A}_k} v_l |x_{[0..\delta_{s_k}(x)-1]}|_l -v_{x_0}}\right).
 $$

 Moreover, for any configuration $x \in \mathcal{A}_k^\N$ we have:
 $$
 \lim_{n\rightarrow + \infty} R^n V_0 (x) = U(x).
 $$
 \end{thm}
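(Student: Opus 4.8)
The plan is to treat the limit statement as the engine and let the fixed-point identity fall out of it. Fix a configuration $x$ and suppose first that $x \notin \Sigma_{s_k}$, writing $p = \delta_{s_k}(x)$; note $p \geq 1$ because every single letter belongs to $\lang_{s_k}$. By Lemma \ref{l.ruelle_power},
$$R^n V_0(x) = \sum_{j=0}^{|s_k^n(x_0)|-1} \frac{1}{\delta_{s_k}(\sigma^j s_k^n(x))}.$$
For $n \geq k$, Proposition \ref{thm.length_kbo} supplies the \emph{exact} value $\delta_{s_k}(\sigma^j s_k^n(x)) = \Delta_n - j$ for every $j < L_n := |s_k^n(x_0)|$, where
$$\Delta_n := \sum_{l \in \A_k} |s_k^n(l)|\,|x_{[0..p-1]}|_l + \sum_{i=0}^{n-1}|s_k^i(0)|$$
is the $j=0$ value, i.e. the length of the maximal prefix of Proposition \ref{p.kbo_prefix}. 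Writing $H_m = \sum_{i=1}^m \tfrac{1}{i}$ and reindexing by $m = \Delta_n - j$, this turns the renormalised potential into a genuine difference of harmonic numbers,
$$R^n V_0(x) = \sum_{j=0}^{L_n - 1}\frac{1}{\Delta_n - j} = H_{\Delta_n} - H_{\Delta_n - L_n};$$
this is the ``almost Riemann sum'' announced in the outline, and the crucial gain from Proposition \ref{thm.length_kbo} is that each summand is exactly $1/(\Delta_n - j)$ with no error term.

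Next I would pass to the limit. Since the image of the first letter is a prefix of the maximal prefix, $L_n \leq \Delta_n$, so both harmonic numbers make sense; I will check below that $\Delta_n$ and $\Delta_n - L_n$ both diverge. Using that $H_m - \log m$ converges as $m \to \infty$,
$$R^n V_0(x) = \log \frac{\Delta_n}{\Delta_n - L_n} + o(1).$$
Now I substitute the Perron--Frobenius expansions of Remark \ref{r.perron}, $|s_k^n(l)| = \gamma_l \lambda^n + O(\theta^n)$ with $\theta < \lambda$, together with $\gamma_l = c\,v_l$ for one common constant $c > 0$ (Remark \ref{r.eigenvector}); summing the geometric part of $\sum_{i=0}^{n-1}|s_k^i(0)|$ and using $v_0 = \lambda$ yields
$$\Delta_n = c\lambda^n\left(\frac{\lambda}{\lambda-1} + \sum_{l\in\A_k} v_l\,|x_{[0..p-1]}|_l\right) + o(\lambda^n), \qquad L_n = c\lambda^n v_{x_0} + o(\lambda^n).$$
The factor $c\lambda^n$ cancels in the ratio (so the eigenvector normalisation is irrelevant), giving
$$\frac{\Delta_n}{\Delta_n - L_n} \longrightarrow 1 + \frac{v_{x_0}}{\frac{\lambda}{\lambda-1} + \sum_{l\in\A_k} v_l|x_{[0..p-1]}|_l - v_{x_0}},$$
whose logarithm is exactly $U(x)$. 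The denominator is positive: $x_0$ occurs in $x_{[0..p-1]}$, so $\sum_l v_l|x_{[0..p-1]}|_l \geq v_{x_0}$, leaving a surplus $\geq \lambda/(\lambda-1) > 0$; this also shows $\Delta_n - L_n \to +\infty$. For $x \in \Sigma_{s_k}$ every $\delta_{s_k}(\sigma^j s_k^n(x)) = +\infty$, so $R^n V_0(x) = 0 = U(x)$ for all $n$ and the limit is trivial.

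With pointwise convergence established on all of $\A_k^\N$, the fixed-point property $RU = U$ comes for free, because $|s_k(x_0)| \in \{1,2\}$ makes the defining sum of $R$ finite so that the limit passes through it: for every $x$,
$$U(x) = \lim_n R^{n+1}V_0(x) = \lim_n \sum_{j=0}^{|s_k(x_0)|-1}(R^nV_0)(\sigma^j s_k(x)) = \sum_{j=0}^{|s_k(x_0)|-1}U(\sigma^j s_k(x)) = RU(x),$$
each inner limit being the pointwise convergence already proved at the point $\sigma^j s_k(x)$. Finally $U \geq 0$ is immediate, and $U$ is continuous: off $\Sigma_{s_k}$ it depends only on $x_0$ and on the finite prefix $x_{[0..\delta_{s_k}(x)-1]}$, hence is locally constant, while the denominator diverges as one approaches $\Sigma_{s_k}$, forcing $U \to 0$ and matching its value there.

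The real work, and the only genuine obstacle, is compressed into the first paragraph: the clean identity $R^nV_0(x) = H_{\Delta_n} - H_{\Delta_n - L_n}$ rests entirely on the exact shift formula of Proposition \ref{thm.length_kbo}, which in turn depends on the explicit recognisability of $s_k$ (Proposition \ref{p.recog_kbo}) and on $s_k$ being right-marked. Once that exact expression is in hand, everything afterwards is routine asymptotic bookkeeping: dominating the $O(\theta^n)$ remainders by $\lambda^n$, keeping the denominator bounded away from $0$, and using that $H_m - \log m$ converges.
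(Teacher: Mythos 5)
Your proof is correct, and while it shares the paper's combinatorial skeleton \--- Lemma \ref{l.ruelle_power}, the exact shift formula of Proposition \ref{thm.length_kbo}, the Perron--Frobenius expansion of Remark \ref{r.perron}, and the eigenvector normalisation $\gamma_l = c\,v_l$, $v_0=\lambda$ of Remark \ref{r.eigenvector} \--- the analytic core is genuinely different from the paper's. The paper normalises the sum by $\lambda^n$, splits off and bounds a boundary block $Q(x,n)$ coming from the remainder $r_{x_0}(n)$, and then invokes the perturbed-Riemann-sum lemma for Lipschitz integrands quoted from \cite{bhl} to identify the limit with the integral $\int_0^{\gamma_{x_0}}\frac{dt}{\frac{\gamma_0}{\lambda-1}+\sum_{l}\gamma_l|x_{[0..p-1]}|_l-t}$, which it finally evaluates as a logarithm. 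You instead observe that, the summand being exactly $1/(\Delta_n-j)$, the sum telescopes into the difference of harmonic numbers $H_{\Delta_n}-H_{\Delta_n-L_n}$, and you conclude from the elementary fact that $H_m-\log m$ converges, together with the (correctly verified) divergence $\Delta_n-L_n\geq\sum_{i=0}^{n-1}|s_k^i(0)|\to+\infty$ and the positivity of the limiting denominator. This is shorter, needs no integral and no external lemma, but it exploits the very specific form of $V_0$: the summand must be exactly $m\mapsto 1/m$ for the harmonic-number identity to apply, whereas the paper's Riemann-sum machinery is the part that generalises to the potentials $g/\delta_{s_k}^\alpha+h/\delta_{s_k}^\alpha$ of Theorem \ref{t.renorm}, where your trick is unavailable. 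A further merit of your write-up is that it makes explicit three points the paper leaves implicit: the degenerate case $x\in\Sigma_{s_k}$, the continuity and nonnegativity of $U$ (locally constant off $\Sigma_{s_k}$, tending to $0$ on approach to $\Sigma_{s_k}$), and above all the fixed-point identity $RU=U$ itself, which you obtain cleanly by passing the pointwise limit through the finite sum defining $R$.
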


\begin{proof}
 For any integer $n$, Lemma \ref{l.ruelle_power} yields:
 $$
 \forall x \in \mathcal{A}_k^\N , \  R^nV_0(x)=\sum_{j=0}^{|s_k^n(x_0)|-1} \frac{1}{\delta_{s_k}(\sigma^j(s_k^n(x))}.
 $$
 
 Moreover, Proposition \ref{thm.length_kbo} yields 
 $$
 \forall n \geq k, \  \forall j < |s_k^n(x_0)|, \  \delta_{s_k}(\sigma^js_k^n(x))=\sum_{l\in \mathcal{A}_k}|s_k^n(l)||x_{[0...p-1]}|_l+\sum_{l=0}^{n-1}|s_k^l(0)|-j.
 $$
 
 Let us also remark that for any integer $n$:
 $$
 \sum_{l=0}^{n-1}|s_k^l(0)|=\sum_{l=0}^{n-1}\left(\gamma_0\lambda^l +r_0(l)\right),
 $$
 thus
 $$
  \sum_{l=0}^{n-1}|s_k^l(0)|=\gamma_0\frac{\lambda^n-1}{\lambda-1} +  \sum_{l=0}^{n-1}r_0(l).
 $$
 Let us denote $r'_0(n)=\displaystyle\frac{\gamma_0}{1-\lambda}+  \sum_{l=0}^{n-1}r_0(l)$ to have:
 $$
 \sum_{l=0}^{n-1}|s_k^l(0)|=\gamma_0\frac{\lambda^n}{\lambda-1} + r'_0(n).
 $$
 
 Finally, we have for any integer $n\geq k$, for any configuration $x \in \mathcal{A}_k^\N$ with $\delta_{s_k}(x)=p$, and for any $j < |s_k^n(x_0)|$:
 $$
  \delta_{s_k}(\sigma^js_k^n(x))=\sum_{l\in \mathcal{A}_k}\left(|x_{[0..p-1]}|_l\left(\gamma_l \lambda^n +r_l(n)\right)\right) +r'_0(n)+\frac{\lambda^n\gamma_0}{\lambda-1}-j
 $$
 which is also equal to:
 $$
 \lambda^n\left(\frac{\gamma_0}{\lambda-1}+\sum_{l\in \mathcal{A}_k}\gamma_l |x_{[0..p-1]}|_l +\frac{r'_0(n)+\sum_{l\in \mathcal{A}_k} r_l(n)|x_{[0..p-1]}|_l}{\lambda^n}\right)-j.
 $$
 
 So we can write for any integer $n$ greater or equal to $k$ and for any configuration $x \in \mathcal{A}_k^\N$:
 $$
R^nV(x)= \frac{1}{\lambda^n}\sum_{j=0}^{\gamma_{x_0}\lambda^n+r_{x_0}(n)-1}\frac{1}{\frac{\gamma_0}{\lambda-1}+\sum_{l\in \mathcal{A}_k}\gamma_l |x_{[0..p-1]}|_l +\frac{r'_0(n)+\sum_{l\in \mathcal{A}_k} r_l(n)|x_{[0..p-1]}|_l}{\lambda^n}-\frac{j}{\lambda^n}}.
 $$
 Let us estimate the term:
 $$
 Q(x,n):=\frac{1}{\lambda^n}\sum_{j=\gamma_{x_0}\lambda^n+1}^{\gamma_{x_0}\lambda^n+r_{x_0}(n)-1}\frac{1}{\frac{\gamma_0}{\lambda-1}+\sum_{l\in \mathcal{A}_k}\gamma_l |x_{[0..p-1]}|_l +\frac{r'_0(n)+\sum_{l\in \mathcal{A}_k} r_l(n)|x_{[0..p-1]}|_l}{\lambda^n}-\frac{j}{\lambda^n}}.
 $$
 Remark that
 $$
 \frac{\gamma_0}{\lambda-1}+\sum_{l\in \mathcal{A}_k}\gamma_l |x_{[0..p-1]}|_l +\frac{r'_0(n)+\sum_{l\in \mathcal{A}_k} r_l(n)|x_{[0..p-1]}|_l}{\lambda^n}-\frac{\gamma_{x_0}\lambda^n+r_{x_0}(n)-1}{\lambda^n}>0
 $$
and that for any integer $j$ in $\llbracket \lfloor \gamma_{x_0}\lambda^n+1\rfloor+1, \gamma_{x_0}\lambda^n+r_{x_0}(n)-1 \rrbracket$, the quantity:
$$
\frac{1}{\frac{\gamma_0}{\lambda-1}+\sum_{l\in \mathcal{A}_k}\gamma_l |x_{[0..p-1]}|_l +\frac{r'_0(n)+\sum_{l\in \mathcal{A}_k} r_l(n)|x_{[0..p-1]}|_l}{\lambda^n}-\frac{j}{\lambda^n}}
$$
is at most
$$
\frac{1}{\frac{\gamma_0}{\lambda-1}+\sum_{l\in \mathcal{A}_k}\gamma_l |x_{[0..p-1]}|_l +\frac{r'_0(n)+\sum_{l\in \mathcal{A}_k} r_l(n)|x_{[0..p-1]}|_l}{\lambda^n}-\frac{ \gamma_{x_0}\lambda^n+1}{\lambda^n}}.
$$
So there exists a constant $C_x>0$, not depending on $n$, such that
$$
 Q(x,n)\leq \frac{1}{\lambda^n} \sum_{j=\gamma_{x_0}\lambda^n+1}^{\gamma_{x_0}\lambda^n+r_{x_0}(n)-1} C_x,
$$
hence
$$
Q(x,n)\leq \frac{C_x r_{x_0}(n)}{\lambda^n}
$$
so, for any configuration $x \in \mathcal{A}_k^\N$, the term $Q(x,n)$ goes to 0 as $n$ goes to infinity.

So let us write, for any configuration $x \in \mathcal{A}_k^\N$ and any integer $n$:
$$
R^nV_0(x)=\frac{1}{\lambda^n}\sum_{j=0}^{\gamma_{x_0}\lambda^n}\frac{1}{\frac{\gamma_0}{\lambda-1}+\sum_{l\in \mathcal{A}_k}\gamma_l |x_{[0..p-1]}|_l +\frac{r'_0(n)+\sum_{l\in \mathcal{A}_k} r_l(n)|x_{[0..p-1]}|_l}{\lambda^n}-\frac{j}{\lambda^n}} +Q(x,n).
$$

Remark that $\frac{r'_0(n)+\sum_{l\in \mathcal{A}_k} r_l(n|x_{[0..p-1]}|_l)}{\lambda^n}$ goes to 0 as $n$ goes to infinity and that the function:
$$
f:y \mapsto \frac{1}{\frac{\gamma_0}{\lambda-1}+\sum_{l\in \mathcal{A}_k}\gamma_l |x_{[0..p-1]}|_l +y}
$$ 
is Lipschitz since $\frac{\gamma_0}{\lambda-1}+\sum_{l\in \mathcal{A}_k}\gamma_l |x_{[0..p-1]}|_l>0$. We use now the following lemma from \cite{bhl}.

\begin{lemma}
 Let $a,\lambda$ be some positive real numbers and $f$ a Lipschitz function defined on a neighborhood of $[0,a]$. Let $\phi:\N\rightarrow \R$ be a real sequence such that $|\phi(n)|\leq C\theta^n$ with $C>0$ and $0<\theta<\lambda$. We have
 $$
 \lim_{n\rightarrow +\infty}\frac{1}{\lambda^n}\sum_{k=0}^{a\lambda^n}f\left( \frac{k+\phi(n)}{\lambda(n)} \right)=\int_{0}^{a}f(x)dx.
 $$
\end{lemma}

Thus, for any configuration $x \in \mathcal{A}_k^\N$, the sequence $ (R^n V_0 (x))_{n\in \N}$ has the same limit as the Riemann sum $\frac{1}{\lambda^n}\displaystyle\sum_{j=0}^{\gamma_{x_0}\lambda^n}f\left(\frac{j}{\lambda^n}\right)$:
$$
\lim_{n\rightarrow + \infty} R^n V_0 (x) = \int_0^{\gamma_{x_0}}  \frac{dt}{\frac{\gamma_0}{\lambda-1}+\sum_{l\in \mathcal{A}_k}\gamma_l |x_{[0..p-1]}|_l -t}
$$
hence, 
$$
\lim_{n\rightarrow + \infty} R^n V_0 (x) = \log\left(\frac{\frac{\gamma_0}{\lambda-1}+\sum_{l\in \mathcal{A}_k}\gamma_l |x_{[0..p-1]}|_l}{\frac{\gamma_0}{\lambda-1}+\sum_{l\in \mathcal{A}_k}\gamma_l |x_{[0..p-1]}|_l - \gamma_{x_0}}\right).
$$
From Remark \ref{r.eigenvector} we deduce Theorem \ref{t.renorm_baby_case}.
\end{proof}

\subsection{Proof of Theorem \ref{t.renorm}}

Let us now define a whole family of potentials in the following way:

$$
\forall x \in \mathcal{A}_k^\N,\, V(x)=\frac{g(x)}{\delta_{s_k}(x)^\alpha}+\frac{h(x)}{\delta_{s_k}(x)^\alpha},
$$
$g$ being a positive continuous function and $h$ being $0$ on $\Sigma_{s_k}$ and $\alpha >0$.

 Theorem \ref{t.renorm} is an improvement upon Theorem \ref{t.renorm_baby_case} for it extends its results to this whole family of potentials.
 
 Since we know Theorem \ref{t.renorm_baby_case} the only necessary ingredient missing to prove this Theorem is the following technical lemma:
 \begin{lemma}\label{l.technical_abel}
  Let $(X,\sigma)$ be a uniquely ergodic subshift whose unique invariant probability measure is denoted $\mu$. Let $f$ be a continous integrable function on $(0,1)$, let $g:X\rightarrow\R$ be a continous function on $X$. Then we have, uniformly in $x \in X$:
  $$
  \lim_{n\rightarrow +\infty} \frac1n\sum_{j=0}^n f\left(\frac{k}{n}\right)g(\sigma^k(x))=\int_0^1f(x)dx\, \int_X gd\mu
  $$
 \end{lemma}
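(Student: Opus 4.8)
The plan is to combine the uniform convergence of Birkhoff averages (which is precisely what unique ergodicity provides) with a Riemann-sum estimate, the two being glued together by Abel summation, whence the name of the lemma. Write $c=\int_X g\,d\mu$ and let $S_m(x)=\sum_{j=0}^{m-1}g(\sigma^j(x))$ be the Birkhoff sums. Since $(X,\sigma)$ is uniquely ergodic and $g$ is continuous, the averages converge \emph{uniformly}, so we may write $S_m(x)=mc+m\,\eta_m(x)$ with $\sup_{x\in X}|\eta_m(x)|\to0$ as $m\to\infty$ and, crucially, $\sup_i\sup_x|\eta_i(x)|\le 2\|g\|_\infty<\infty$. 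This uniform control is the only place unique ergodicity enters, and it is what will ultimately make the convergence uniform in $x$.

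First I would prove the lemma for $f$ Lipschitz on $[0,1]$ with constant $L$. Writing $b_j=f(j/n)$ and $a_j=g(\sigma^j(x))=S_{j+1}(x)-S_j(x)$, summation by parts (with $S_0=0$) gives
$$
\sum_{j=0}^n f\!\left(\tfrac{j}{n}\right)g(\sigma^j(x))=S_{n+1}(x)\,b_n+\sum_{i=1}^n S_i(x)\,(b_{i-1}-b_i).
$$
Substituting $S_i(x)=ic+i\,\eta_i(x)$ splits the right-hand side into a main term and an error term. For the main term, the elementary identity $(n+1)b_n+\sum_{i=1}^n i\,(b_{i-1}-b_i)=\sum_{j=0}^n b_j$ collapses everything to $c\sum_{j=0}^n f(j/n)$, so after dividing by $n$ it converges to $c\int_0^1 f(t)\,dt$ by the usual Riemann-sum convergence for the continuous function $f$. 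For the error term the Lipschitz bound $|b_{i-1}-b_i|\le L/n$ gives
$$
\frac1n\left|\sum_{i=1}^n i\,\eta_i(x)\,(b_{i-1}-b_i)\right|\le \frac{L}{n^2}\sum_{i=1}^n i\,\sup_{x}|\eta_i(x)|,
$$
and the right-hand side tends to $0$ uniformly in $x$ by a weighted Cesàro argument (split the sum at a level $N$ beyond which $\sup_x|\eta_i|<\varepsilon$, using the uniform bound $2\|g\|_\infty$ on the initial block). The $c$-part of the boundary term $S_{n+1}(x)b_n$ was already absorbed into the identity above, while its error part $\tfrac{n+1}{n}\eta_{n+1}(x)b_n$ tends to $0$ since $f$ is bounded on $[0,1]$ and $\sup_x|\eta_{n+1}|\to0$. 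This settles the Lipschitz case, uniformly in $x$.

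To pass to a general continuous integrable $f$ on $(0,1)$, I would approximate: given $\varepsilon>0$ pick a Lipschitz $f_\varepsilon$ with $\int_0^1|f-f_\varepsilon|<\varepsilon$, apply the previous step to $f_\varepsilon$, and control the difference by $\|g\|_\infty\cdot\frac1n\sum_{j=0}^n|f-f_\varepsilon|(j/n)$. The main obstacle lies exactly here: since $f$ is only assumed integrable on the \emph{open} interval $(0,1)$, it may be unbounded near the endpoints, and then the Riemann sums of $|f-f_\varepsilon|$ need not converge to the integral. I would resolve this by truncation, fixing a small $\rho>0$, handling $[\rho,1-\rho]$ where every function is bounded and uniformly continuous so that Riemann sums behave, and bounding the two end-pieces through the integrability of $f$ together with $\|g\|_\infty$, letting $\rho\to0$ after $n\to\infty$. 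Combining the three estimates and letting first $n\to\infty$ and then $\varepsilon\to0$ yields the claim. I note that in the use of this lemma inside Theorem~\ref{t.renorm} the relevant $f$ is the bounded Lipschitz function $t\mapsto\bigl(\tfrac{\gamma_0}{\lambda-1}+\sum_{l}\gamma_l|x_{[0..p-1]}|_l-t\bigr)^{-1}$ on a compact subinterval, so this delicate endpoint analysis is not actually needed for the application and only the Lipschitz case is invoked.
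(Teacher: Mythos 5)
You should know at the outset that the paper contains no proof of this lemma: it simply refers the reader to Lemma 3.10 of \cite{bhl}, so your attempt can only be judged on its own merits rather than against an in-paper argument. Your Lipschitz case is correct and complete. The summation-by-parts formula $\sum_{j=0}^n f(j/n)g(\sigma^j x)=S_{n+1}(x)b_n+\sum_{i=1}^n S_i(x)(b_{i-1}-b_i)$ with $S_0=0$ is right, the collapsing identity $(n+1)b_n+\sum_{i=1}^n i(b_{i-1}-b_i)=\sum_{j=0}^n b_j$ checks out, and your treatment of the error term \--- the a priori bound $\sup_i\sup_x|\eta_i(x)|\le 2\|g\|_\infty$, the split of $\frac{L}{n^2}\sum_{i=1}^n i\,E_i$ (where $E_i=\sup_x|\eta_i(x)|$) at a level $N$ beyond which $E_i<\varepsilon$, and the boundary term $\frac{n+1}{n}\eta_{n+1}(x)b_n$ \--- correctly converts the uniform Birkhoff convergence supplied by unique ergodicity into uniformity in $x$ of the conclusion. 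This is a sound, self-contained proof for Lipschitz $f$, hence also for any $f$ continuous on the closed interval $[0,1]$ by uniform approximation.

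The gap is in your final step, and you half-acknowledge it yourself. For $f$ merely continuous and integrable on the \emph{open} interval $(0,1)$, the end-pieces of your truncation cannot be ``bounded through the integrability of $f$'': Riemann sums of an unbounded integrable function are simply not controlled by its integral. Concretely, take $X$ any uniquely ergodic subshift, $g\equiv 1$, and let $f\ge 0$ be continuous on $(0,1)$, vanishing outside narrow tent-shaped spikes centered at the points $1/m$ of height $m^2$ and width $m^{-5}$; then $\int_0^1 f<\infty$, but along $n=m$ the single term $j=1$ contributes $\frac1n f(1/n)=n\to\infty$, so $\frac1n\sum_j f(j/n)$ diverges. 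Hence for fixed $\rho$ the end-piece $\frac1n\sum_{j/n<\rho}|f(j/n)|$ can already blow up as $n\to\infty$, no order of limits in $\rho$ and $n$ repairs this, and in fact the lemma is false under the hypotheses as literally written: some additional control near the endpoints (boundedness, monotonicity, or a hypothesis tailored to the potentials actually being renormalized) is required. Your closing observation is therefore the essential one \--- in the application within Theorem \ref{t.renorm} the relevant $f$ is Lipschitz on a compact interval, so the part of your argument that is correct is the part that is used \--- but as a proof of the statement as given, the last step does not close, and the blame lies at least as much with the statement as with your proof.
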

For the proof of this lemma, we refer the reader to Lemma 3.10 in \cite{bhl}.

The cases $\alpha <1$ and $\alpha>1$ can easily be seen from the computations in the proof of Theorem \ref{t.renorm_baby_case}. If $\alpha<1$ then the Riemann sum diverges towards infinity because the exponent on the denominator is to small and if $\alpha >1$ the the Riemann sum is crushed towards $0$.

The only interesting case is when $\alpha =1$, then we apply Lemma \ref{l.technical_abel} to $s_k^n(x)$ which is possible because we have uniform convergence and we use the computations of Theorem \ref{t.renorm_baby_case}.

\begin{rem}
 This theorem states that the family of potentials of the form $V(x)=\frac{g(x)}{\delta_{s_k}(x)^\alpha}+\frac{h(x)}{\delta_{s_k}(x)^\alpha}$ is stable under the renormalization operator. Notice that the first order Taylor expansion of $U$ is also of the form $\frac{g(x)}{\delta_{s_k}(x)}+\frac{h(x)}{\delta_{s_k}(x)}$. Hence amongst this family of potentials which is of some interest to us, this particular fixed point for $R$ is the only on to span an attracting ``line'' which stays in this family of potentials.
\end{rem}

\section{Freezing phase transition: proof of Theorem \ref{t.fpt}}\label{s.fpt}

Recall that for a given potential $V$, we define the pressure function for every positive real number $\beta$ by:
$$
P(\beta)=\sup\left\{h_{\mu} + \beta\int_X V d\mu\right\}.
$$
We are interested in points of non analyticity of the pressure function. Such points are called phase transitions. It is also known that the pressure function has an asymptote of the form $-a\beta+b$ with $a$ and $b$ non negative real numbers. If it reaches its asymptote we speak of freezing phase transition. It is obvious that, having $V$ supported on $\A_k^\N \backslash \Sigma_{s_k}$ and $(\Sigma_{s_k},\sigma)$ being uniquely ergodic of entropy zero, if the pressure function has a freezing phase transition, this function being decreasing and convex, then necessarily the asymptote it reaches is the horizontal axis.

Theorem 3 from \cite{bhl} gives a set of sufficient conditions to have a freezing phase transition in the case of subshifts. Namely, in order to have a freezing phase transition, it is sufficient for the subshift to satisfy:
\begin{itemize}
 \item being linear recurrent, which means that there exists a constant $C>0$ such that for any $x$ in the subshift and for any word $w$ of size $n$ appearing in $x$, two consecutive occurrences of $w$ in $x$ are separated by a word of length at most $Cn$.
 \item having all bispecial words (see Definition \ref{d.bispecial})  of length $c.\lambda^n +o(\lambda^n)$, where $\lambda >1$ and $c$ is chosen in a finite set;
 \item having only bispecial words not overlapping each other for more than a fixed proportion than the smaller one. We recall that two words $u$ and $v$ overlap (with overlap $u \cap v$ )if we can write:
 \bigskip
  \begin{center}
     \begin{tikzpicture}
 \draw (0,0)--(8,0);
 \draw (0,0.2)--(0,-0.2);
 \draw (4,0.2)--(4,-0.2);
 \draw (8,0.2)--(8,-0.2);
 \draw (6.5,0.2)--(6.5,-0.2);
 \draw[decorate,decoration={brace, amplitude=4pt, raise=1cm}](0.1,0)--(6.4,0)node [black,midway,yshift=1.4cm]{$u$};
 \draw[decorate,decoration={brace, amplitude=4pt, raise=0.4cm}](4.1,0)--(6.4,0)node [black,midway,yshift=.8cm]{$u \cap v$}; 
 \draw[decorate,decoration={brace, amplitude=4pt, mirror, raise=0.4cm}](4.1,0)--(7.9,0)node [black,midway,yshift=-.8cm]{$v$};
\end{tikzpicture}
  \end{center}
  with $u \cap v$ of maximal size and different from $u$ and $v$.
\end{itemize}

The first condition is true since a subshift associated to a substitution is linearly recurrent \cite{durand}.

We remind the following property for $k$-bonacci substitutions.
\begin{prop}
For any $k\geq2$, the set of bispecial words words for the $k$-bonacci substitution is exactly:
 $$
\left\{s_k^n(0)s_k^{n-1}(0)...s_k(0)0,\, n \in \N\right\}.
$$
\end{prop}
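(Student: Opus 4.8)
The plan is to reduce the statement to the identification of the palindromic prefixes of the fixed point $\omega^{s_k}$, and then to pin down those prefixes by desubstitution. Throughout I write $\widetilde w$ for the mirror image (reversal) of a finite word $w$, and I set $B_n := s_k^n(0)s_k^{n-1}(0)\cdots s_k(0)0$, so that $B_0=0$ and, applying $s_k$ to $B_{n-1}$ and appending the missing final letter, $B_n=s_k(B_{n-1})\,0$.

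First I would recall that $\omega^{s_k}$ is the characteristic Arnoux--Rauzy word on $\A_k$ (see \cite{arnoux_rauzy,rauzy}): its language is closed under reversal, for every length there is a unique left-special factor which is exactly the prefix of $\omega^{s_k}$ of that length, and the unique right-special factor of that length is its mirror image. Granting this, a factor $w$ is bispecial iff it is simultaneously a prefix of $\omega^{s_k}$ and the mirror image of a prefix; since the prefix of a given length is unique, this forces $w=\widetilde w$. Hence \emph{the bispecial words are exactly the palindromic prefixes of $\omega^{s_k}$}, and it remains to show these are the $B_n$ (the empty word being the degenerate case not listed in the statement). Note that closure under reversal can also be obtained self-containedly once the next step produces infinitely many palindromic prefixes: by minimality (Theorem \ref{t.ergo}) any factor occurs inside a long palindromic prefix $p=\widetilde p$, so its reverse occurs in $p$ as well.

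Second I would prove that each $B_n$ is a palindromic prefix. That $B_n$ is a prefix follows by induction from $\omega^{s_k}=s_k(\omega^{s_k})$: if $B_{n-1}$ is a prefix then $s_k(B_{n-1})$ is a prefix of $s_k(\omega^{s_k})=\omega^{s_k}$, and the next letter is the first letter of an image of a letter, namely $0$, so $B_n=s_k(B_{n-1})0$ is a prefix. For palindromicity, let $\theta$ be the morphism $a\mapsto \widetilde{s_k(a)}$; checking the two letter-types gives $0\,\theta(a)=s_k(a)\,0$, and sliding this identity along a word yields $0\,\theta(u)=s_k(u)\,0$ for every word $u$. Since $\widetilde{s_k(u)}=\theta(\widetilde u)$, if $B_{n-1}$ is a palindrome then $\widetilde{B_n}=0\,\widetilde{s_k(B_{n-1})}=0\,\theta(B_{n-1})=s_k(B_{n-1})0=B_n$, closing the induction from $B_0=0$.

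Third, and this is the crux, I would show there are no other palindromic prefixes, by desubstitution. The key observation is that the letter $0$ occurs in $\omega^{s_k}=s_k(\omega^{s_k})$ only as the first letter of a level-one block $s_k(a)$, because inside each $s_k(a)$ the letter $0$ appears only in first position; thus the occurrences of $0$ are exactly the level-one block boundaries. Let $p$ be a nonempty palindromic prefix: since $p$ begins with $0$ and $p=\widetilde p$, it ends with $0$; removing this last $0$ leaves a prefix that is a concatenation of complete level-one blocks, hence equals $s_k(u)$ for the prefix $u$ obtained by desubstitution, so $p=s_k(u)\,0=:\Phi(u)$. The palindrome condition $s_k(u)0=\widetilde{s_k(u)0}=0\,\theta(\widetilde u)$ together with $s_k(u)0=0\,\theta(u)$ gives $\theta(u)=\theta(\widetilde u)$; as the letter-images of $\theta$ have pairwise distinct first letters, $\theta$ is injective, so $u=\widetilde u$. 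Thus $\Phi$ is a strictly length-increasing bijection from palindromic prefixes onto the nonempty ones, whence the palindromic prefixes are exactly $\varepsilon,\ \Phi(\varepsilon)=B_0,\ \Phi^2(\varepsilon)=B_1,\dots$, i.e. $\{B_n : n\in\N\}$ together with $\varepsilon$. The main obstacle is precisely the surjectivity of $\Phi$ in this step, which rests on the recognizability fact that $0$ marks exactly the level-one block boundaries and on the injectivity of the reversed morphism $\theta$; the Arnoux--Rauzy reduction of the first step is the other substantive input, and if one wishes to avoid quoting it one must additionally prove that every prefix of $\omega^{s_k}$ is left-special, which with closure under reversal makes each palindrome $B_n$ genuinely bispecial.
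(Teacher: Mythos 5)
Your proposal is mathematically sound, but it takes a genuinely different route from the paper's. The paper works directly with bispecial words and never invokes structure theory: since any letter $a\neq 0$ occurs in $\lang_{s_k}$ only as the second letter of a block $s_k(a-1)$, it is always preceded and always followed by $0$, so left-specialness forces a leading $0$ and right-specialness a trailing $0$; hence every bispecial word $w\neq 0$ desubstitutes uniquely as $w=s_k(v)0$ with $v$ again bispecial, and a downward induction on length (the only short bispecial word being $0$) finishes both directions at once. You instead reduce bispeciality to being a palindromic prefix of $\omega^{s_k}$, quoting the Arnoux--Rauzy facts that the unique left-special factor of each length is the prefix and that the language is reversal-closed, and then you classify palindromic prefixes by essentially the same desubstitution mechanism the paper uses ($0$ marks exactly the level-one block boundaries, and the reversed morphism $\theta$ is injective); that part of your argument, including the conjugacy identity $0\,\theta(u)=s_k(u)\,0$ proving each $B_n$ is a palindrome, is complete and correct. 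What your route buys is the pleasant structural identification ``bispecial $=$ palindromic prefix'' and contact with episturmian theory; what it costs is precisely that external input. Two caveats there: the references you lean on (\cite{arnoux_rauzy}, \cite{rauzy}) only treat $k\leq 3$, so for general $k$ the uniqueness of left-special factors must be borrowed from the episturmian literature, which the paper's bibliography does not contain; and your sketched self-contained patch recovers reversal-closure and the bispeciality of the $B_n$, but not the implication ``left-special $\Rightarrow$ prefix'', which is exactly what the completeness direction (every bispecial word is some $B_n$) requires. If you tried to prove that implication by hand, the natural argument is to desubstitute left-special words directly --- at which point you would essentially be rewriting the paper's shorter proof.
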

\begin{proof}
It is an easy check that for any $n$, $s_k^n(0)s_k^{n-1}(0)...s_k(0)0$ is bispecial. Indeed, one can check that if $w$ is bispecial, then $s_k(w)0$ is bispecial and notice that $0$ is bispecial. So let us prove that if $w$ is bispecial, then there exists $n$ such that $w=s_k^n(0)s_k^{n-1}(0)...s_k(0)0$. First, notice that for $w$ to be left special, it must start with $0$. For $w$ to be right special, it has to end with $0$. Assume $w\neq 0$, then there is a unique word $v$  in $\lang_{s_k}$ such that $s_k(v)0=w$. Now one can check that $w$ being bispecial implies that $v$ is in turn bispecial, and the length of $v$ being lesser thant the length of $w$, iterating this procedure of ``desubstitution'' yields the result since the only bispecial word of length two or less is $0$.
\end{proof}

Remark \ref{r.perron} and the computations in the proof of Theorem \ref{t.renorm_baby_case} give the length of a bispecial word:
$$
 \sum_{l=0}^{n}|s_k^l(0)|=\gamma_0\frac{\lambda^{n+1}}{\lambda-1} + r'_0(n).
$$
This proves that the second property holds.

Finally,  to have a freezing phase transition, it is enough to know that there exists $c<1$ such that if $u$ and $v$ are overlapping bispecial words, then $|u \cap v | \leq c \min\{|u|,|v|\}.$ Here $u\cap v$ denotes the word of maximal size that is both a prefix of $v$ and a suffix of $u$.

 Let us define, for all $n$, the bispecial word $b_n=s_k^n(0)s_k^{n-1}(0)...s(0)0$. If two bispecials $u$ and $v$ overlap, then necessarily, the overlap $u \cap v$ is a bispecial word. We are not interested in the case where $u \cap v =u$ or $u \cap v =v$. Then let $u=b_n$ for a certain $n$, necessarily, $u \cap v=b_m$ for a certain $m$ smaller than $n$. So
 $$
 \frac{|u \cap v|}{|u|}=\frac{|b_m|}{|b_n|}
 $$
 and
 $$
 \frac{b_m}{b_n}\leq\frac{|b_m|}{|b_{m+1}|}.
 $$
 Now, applying Perron-Frobenius on the incidence matrix of $k$-bonacci substitution yields:
 $$
 \lim_{m\rightarrow +\infty} \frac{|b_m|}{|b_{m+1}|}= \frac{1}{\lambda}
 $$
 where $\lambda$ is the single dominating root of $X^k-\displaystyle\sum_{j=0}^{k-1}X^j$.
 
 The case  $ \frac{|u \cap v|}{|v|}$ is treated the same way and yields the same result.
 
 So the third property holds and this proves Theorem \ref{t.fpt}.
  
\appendix
\section*{Appendix}

\section{Example of the Tribonacci substitution}\label{s.tribo}

In this section we give the direct proof of Proposition \ref{thm.length_kbo} in the case of the Tribonacci substitution, which is the $k=3$ case.

\begin{prop}\label{p.tribodesub}
 Any word in $\lang_{s_3}$ which starts by a $0$ and ends by either $1$ or $2$ has a unique preimage by the Tribonacci substitution.
\end{prop}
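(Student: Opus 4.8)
The plan is to exploit the fact that the three images $s_3(0)=01$, $s_3(1)=02$ and $s_3(2)=0$ all begin with the letter $0$, and that $0$ occurs in each of them only as the initial letter. This single observation controls both existence and uniqueness of a preimage, the difference being that uniqueness is a property of $s_3$ alone, whereas existence is where the two boundary hypotheses (starting with $0$, ending with $1$ or $2$) together with the membership $w\in\lang_{s_3}$ are genuinely used.

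First I would record the structural lemma: in any word of the form $s_3(v)$, the occurrences of $0$ are exactly the left endpoints of the successive blocks $s_3(v_0),s_3(v_1),\dots$, so the factorization of $s_3(v)$ into images is recovered by cutting immediately before each $0$. Moreover the block-to-letter assignment $01\mapsto 0$, $02\mapsto 1$, $0\mapsto 2$ is injective; equivalently $\{0,01,02\}$ is a code and $s_3$ is an injective morphism on $\A_3^*$. Uniqueness of the preimage is then immediate: if $s_3(u)=s_3(v)=w$, the two factorizations of $w$ into images both cut before each $0$, hence coincide block by block, and injectivity of the block-to-letter map forces $u=v$.

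For existence I would argue as follows. Since $w$ starts with $0$ and ends with a letter in $\{1,2\}$, it has length at least $2$; being in $\lang_{s_3}$, it is therefore a factor of $s_3^n(0)=s_3(V)$ for some $n\ge 1$, where $V=s_3^{n-1}(0)$. Because $w$ begins with $0$, and $0$ marks a block boundary of $s_3(V)$, the first letter of $w$ sits at the start of some block $s_3(V_i)$, so the blocks $s_3(V_i),s_3(V_{i+1}),\dots$ tile $w$ from the left. The last letter of $w$ lies inside one of these blocks, and here the hypothesis on the final letter is decisive: the letter $1$ occurs in the images only as the final letter of $01$ and the letter $2$ only as the final letter of $02$, so the block containing the last letter of $w$ is completed exactly at the right endpoint of $w$. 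Hence $w$ is a concatenation of complete consecutive blocks, i.e. $w=s_3(V_{[i...j]})$, and $v:=V_{[i...j]}\in\lang_{s_3}$ is the sought preimage.

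The main obstacle is precisely this right-endpoint alignment, which is the only place the hypotheses are needed and which explains them: a factor of $s_3(V)$ ending in $0$ could have truncated a block $01$ or $02$ right after its initial $0$, so it need not end at a block boundary and might have no preimage at all. Requiring the final letter to be $1$ or $2$ rules this out, while requiring the first letter to be $0$ guarantees that the left endpoint is already a block boundary.
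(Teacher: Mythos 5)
Your proof is correct and rests on exactly the same observation as the paper's: since $s_3(0)=01$, $s_3(1)=02$, $s_3(2)=0$, every occurrence of $0$ marks the start of an image block, so one desubstitutes uniquely by cutting before each $0$ and reading the following letter. The only difference is that you spell out the existence/right-endpoint alignment (a final $1$ or $2$ must complete a block $01$ or $02$, so the preimage $V_{[i\ldots j]}$ lies in $\lang_{s_3}$), which the paper treats as obvious; this is a welcome elaboration, not a different route.
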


This is obviously true. Suffices to read from left to right:
\begin{itemize}
 \item Every $0$ marks the beginning of an image of a letter.
 \item The letter which comes after a $0$ gives a unique way of desubstituting.
\end{itemize}

\begin{prop}\label{p.tribolang}
 The only three letters word starting by $00$ in $\lang_{s_3}$ is $001$.
\end{prop}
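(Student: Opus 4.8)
The plan is to prove the two inclusions separately: that $001$ genuinely lies in $\lang_{s_3}$, and that it is the only length-$3$ word beginning with $00$ that does. For membership I would argue directly: the letter $2$ is followed by $0$ in the fixed point, for instance $20 \sqsubset s_3^3(0) = 0102010$, so $20 \in \lang_{s_3}$; applying the substitution then gives $s_3(20) = 0\cdot 01 = 001 \in \lang_{s_3}$.

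For uniqueness, the key structural observation is that $00$ can never occur inside a single image of a letter, since the three images $s_3(0)=01$, $s_3(1)=02$ and $s_3(2)=0$ contain no $00$. Because every word of $\lang_{s_3}$ is a subword of some $s_3^m(a)$ with $m\geq 1$, and such a word is a concatenation of image-blocks $s_3(b)$, any occurrence of $00$ must straddle two consecutive blocks. The only image ending in $0$ is the singleton block $s_3(2)=0$, so the first $0$ of $00$ must be exactly this block, while the second $0$ is the initial letter of the following block $s_3(c)$, where $c$ is the letter that follows $2$ in the preimage. Consequently the third letter of a word $00a \in \lang_{s_3}$ is forced to be the second letter of $s_3(c)$.

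The crux is therefore to pin down which letters $c$ can follow $2$. Here the point is that the letter $2$ is produced only by $s_3(1)=02$, so in the block decomposition every occurrence of $2$ sits at the end of a block $02$; the letter immediately after it is then the first letter of the next block, which is $0$ for all three images. Hence $c=0$ necessarily, the following block is $s_3(0)=01$, and its second letter is $1$. This forces $a=1$, so $00a=001$ and the proof is complete.

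I expect the main obstacle to be the careful justification that every occurrence of the letter $2$ lies at an image boundary and is therefore always followed by $0$; this rests on the fact that $2$ is generated solely within $s_3(1)$, combined with the block decomposition of $s_3^m(a)$, and it is precisely where the right-marked structure of the substitution enters. Once this is secured, the remainder is a direct inspection of the three images of letters.
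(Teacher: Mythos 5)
Your proof is correct, and it rests on the same underlying mechanism as the paper's --- desubstitution via the block decomposition of $s_3^m(b)$ into the images $s_3(0)=01$, $s_3(1)=02$, $s_3(2)=0$ --- but it is organized differently and is in fact more complete. The paper argues by exclusion: it shows $000\notin\lang_{s_3}$ and $002\notin\lang_{s_3}$ by desubstituting each word, reducing respectively to $22\in\lang_{s_3}$ and $21\in\lang_{s_3}$, both of which it dismisses as ``clearly'' false, and it never explicitly verifies $001\in\lang_{s_3}$. You instead force the conclusion directly: any occurrence of $00$ must straddle a block boundary, the first $0$ must be the block $s_3(2)$, and your key lemma --- every $2$ arises as the final letter of a block $02$, hence is always followed by $0$ --- pins down the next block as $s_3(0)=01$ and the third letter as $1$. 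That lemma is exactly the justification hiding behind the paper's two ``clearly'' claims (it is equivalent to $21,22\notin\lang_{s_3}$), so your route proves what the paper takes for granted, and you also supply the membership half ($20\sqsubset s_3^3(0)$, hence $001=s_3(20)\in\lang_{s_3}$), which the statement strictly requires. Two small points to tidy for full rigor: (i) your sentence ``the third letter is the second letter of $s_3(c)$'' silently assumes $c\neq2$ (if $c=2$ the block $s_3(c)=0$ is a singleton and the third letter would come from the block after it); this is harmless because your lemma yields $c=0$, but that deduction should be placed after the lemma, not before. (ii) The lemma about occurrences of $2$ is applied to the preimage $s_3^{m-1}(b)$, so you need $m\geq2$ so that this preimage is itself an image under $s_3$; this holds automatically since a three-letter word cannot sit inside $s_3(b)$, which has length at most $2$.
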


\begin{proof}

 $000 \notin \lang_{s_3}$ since it is either equal to $s_3(222)$ or is a prefix of $s_3(220)$ or $s_3(221)$. Either way, $000 \in \lang_{s_3}$ would imply that $22 \in \lang_{s_3}$ which is clearly not the case.
 $002 \notin \lang_{s_3}$ since $s_3(21)=002$ and $21$ is clearly not in the language.
 
\end{proof}

\begin{rem}\label{r.length_tribo}
 Let us remark right away that for any integer $n$:
 $$
 s_3^{n+3}(0)=s_3^{n+2}(0)s_3^{n+1}(0)s_3^{n}(0)
 $$
 which can also be written, for any positive integer $n$
 $$
 s_3^{n+3}(2)=s_3^{n+2}(2)s_3^{n+1}(2)s_3^{n}(2).
 $$
 
\end{rem}

\begin{prop}\label{p.delta_tribo}
 Let $x \in \{0,1\}^\N $ such that $\delta (x) = p$ then for any $n$ in $\N^*$, the maximal prefix of $s_3^n(x)$ in $\lang_{s_3}$ is:
 $$
 s_3^n(x_{[0...p-1]})s_3^{n-1}(0)...0.
 $$
 \end{prop}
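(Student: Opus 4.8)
The plan is to reduce the statement to a single ``one-step'' claim and then to iterate it. Concretely, I would first prove that for \emph{any} configuration $z \in \A_3^\N$ with $\delta_{s_3}(z)=q\geq 1$, the maximal prefix of $s_3(z)$ lying in $\lang_{s_3}$ is exactly $s_3(z_{[0...q-1]})0$. Granting this, the proposition follows by induction on $n$: taking $z=x$ gives the case $n=1$, and if the maximal prefix of $s_3^{n}(x)$ is $s_3^{n}(x_{[0...p-1]})s_3^{n-1}(0)\cdots 0$, then applying the one-step claim to $z=s_3^{n}(x)$ (whose value of $\delta_{s_3}$ is precisely the length of that prefix) yields the maximal prefix of $s_3^{n+1}(x)$ as the image of the previous prefix followed by a $0$, that is $s_3^{n+1}(x_{[0...p-1]})s_3^{n}(0)\cdots 0$. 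Note that this argument never requires $z$ itself to lie in $\{0,1\}^\N$, so only the base configuration $x$ is constrained.

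For the existence half of the one-step claim, I would use factoriality and extendability of $\lang_{s_3}$ (Remark \ref{r.factlang}): since $z_{[0...q-1]}\in\lang_{s_3}$ there is a letter $b$ with $z_{[0...q-1]}b\in\lang_{s_3}$, hence $s_3(z_{[0...q-1]}b)\in\lang_{s_3}$; as every image of a letter begins with $0$, the word $s_3(z_{[0...q-1]})0$ is in the language, and it is clearly a prefix of $s_3(z)$ because $s_3(z_q)$ also begins with $0$.

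The crux is the maximality half. Suppose, for contradiction, that $s_3(z_{[0...q-1]})0\,a\in\lang_{s_3}$, where $a$ is the letter following this prefix in $s_3(z)$. If $a\in\{1,2\}$, then $a$ must be the second letter of $s_3(z_q)$, so $z_q\in\{0,1\}$ and $a=z_q+1$; the word $s_3(z_{[0...q-1]})0a=s_3(z_{[0...q]})$ starts with $0$ and ends with $a\in\{1,2\}$, so Proposition \ref{p.tribodesub} gives it a unique preimage, namely $z_{[0...q]}$, forcing $z_{[0...q]}\in\lang_{s_3}$ and contradicting $\delta_{s_3}(z)=q$. If instead $a=0$, then $z_q$ must equal $2$, since the only way for two consecutive $0$'s to appear at this spot is that $s_3(z_q)=s_3(2)=0$ abuts the initial $0$ of the next image. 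I would then extend by one further letter $\gamma$ of $s_3(z)$ and invoke Proposition \ref{p.tribolang}: the only three-letter factor starting with $00$ in $\lang_{s_3}$ is $001$, whence $\gamma=1$ and $s_3(z_{[0...q-1]})001\in\lang_{s_3}$. This word ends with $1$, so Proposition \ref{p.tribodesub} applies once more; as $001=s_3(2)s_3(0)=s_3(20)$, its unique preimage is $z_{[0...q-1]}20$, so $z_{[0...q-1]}2=z_{[0...q]}\in\lang_{s_3}$, again contradicting $\delta_{s_3}(z)=q$.

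I expect the case $a=0$ to be the main obstacle, precisely because it is where the non-injectivity of desubstitution appears: an isolated $0$ can be either $s_3(2)$ or the first letter of some $s_3(c)$ with $c\neq 2$. Resolving it is exactly what Proposition \ref{p.tribolang} provides, by pinning the continuation down to $001$ and thereby restoring a word that ends in $1$ and desubstitutes uniquely through Proposition \ref{p.tribodesub}. Once both cases are closed, the one-step claim holds for every $z$, and the induction described above completes the proof.
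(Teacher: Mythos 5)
Your argument is, in substance, the paper's own proof: the same one-step claim followed by iteration, the same case split (your case $a\in\{1,2\}$ corresponds to the paper's cases $x_p\in\{0,1\}$, your case $a=0$ to its case $x_p=2$), and the same two ingredients, Proposition \ref{p.tribodesub} for the first case and Proposition \ref{p.tribolang} combined with Proposition \ref{p.tribodesub} for the second. Your explicit observation that the one-step claim must be proved for an \emph{arbitrary} configuration $z$ (since the induction applies it to $z=s_3^{n}(x)$) is a point the paper leaves implicit, and you are right that the restriction $x\in\{0,1\}^\N$ in the statement is immaterial.

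One step must be reworded, because read literally it fails. In the case $a=0$ you ``extend by one further letter $\gamma$ of $s_3(z)$'' and deduce $\gamma=1$ from Proposition \ref{p.tribolang}. But the letter of $s_3(z)$ following the prefix $s_3(z_{[0...q-1]})00$ is either $z_{q+1}+1$ (if $z_{q+1}\in\{0,1\}$) or $0$ (if $z_{q+1}=2$), so it need not equal $1$; moreover nothing guarantees that this longer prefix of $s_3(z)$ lies in $\lang_{s_3}$ at all \--- your contradiction hypothesis concerns only the one-letter extension $s_3(z_{[0...q-1]})00$. The letter $\gamma$ must instead come from extendability of the language (Remark \ref{r.factlang}): since $s_3(z_{[0...q-1]})00\in\lang_{s_3}$, there exists some letter $\gamma$ with $s_3(z_{[0...q-1]})00\gamma\in\lang_{s_3}$; factoriality gives $00\gamma\in\lang_{s_3}$, and Proposition \ref{p.tribolang} then forces $\gamma=1$. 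With this reading, your desubstitution of $001=s_3(20)$ and the resulting contradiction go through verbatim, and this is exactly how the paper argues.
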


\begin{proof}
 Let $x$ be a configuration on the alphabet $\{0,1,2\}$ such that the prefix $x_{[0...p-1]} \in \lang_{s_3}$ but $x_{[0...p]} \notin \lang_{s_3}$. We only prove the proposition for $n=1$, the general case being an immediate consequence.
 
 We have three cases to treat.
 \begin{itemize}
  \item If $x_p=0$, then $x_{[0...p-1]}0 \notin \lang_{s_3}$ hence $x_{[0...p-1]}1 \in \lang_{s_3}$ or $x_{[0...p-1]}2 \in \lang_{s_3}$ because a language defined by a substitution is extendable.
  Moreover 
  $$
  s_3(x)=s_3(x_{[0...p-1]})s_3(0)...
  $$
  so
  $$
  s_3(x)=s_3(x_{[0...p-1]})01...
  $$
  and $s_3(x_{[0...p-1]})0$ is a prefix of $s_3(x_{[0...p-1]}1)$ and $s_3(x_{[0...p-1]}2)$.
 As a consequence, $s_3(x_{[0...p-1]})0 \in \lang_{s_3}$. However $s_3(x_{[0...p-1]})01 \notin \lang_{s_3}$ because otherwise $x_{[0...p-1]}0$ would be in the language by Proposition \ref{p.tribodesub}. Finally the maximal prefix of $s_3(x)$ in the language is $s_3(x_{[0..p-1]})0$.
 \item We can treat the case $x_p=1$ in a similar way since a word starting by 0 and ending by a $2$ has a unique preimage by Tribonacci substitution.
  \item If $x_p=2$, then $x_{[0...p-1]}2 \notin \lang_{s_3}$ hence $x_{[0...p-1]}0 \in \lang_{s_3}$ or $x_{[0...p-1]}1 \in \lang_{s_3}$.
   Moreover 
  $$
  s_3(x)=s_3(x_{[0...p-1]})s_3(2)s_3(x_{p+1})...
  $$
  so
  $$
  s_3(x)=s_3(x_{[0...p-1]})00...
  $$
  and $s_3(x_{[0...p-1]})0$ is both a prefix of $s_3(x_{[0...p-1]}0)$ and $s_3(x_{[0...p-1]}1)$, one of which is in the language. As a consequence, $s_3(x_{[0...p-1]})0 \in \lang_{s_3}.$ However, $s_3(x_{[0...p-1]})00 \notin \lang_{s_3}$. Indeed let us suppose that  $s_3(x_{[0...p-1]})00$ is in $\lang_{s_3}$. Then, by Proposition \ref{p.tribolang}, necessarily $s_3(x_{[0...p-1]})001$ would be in $\lang_{s_3}$ which is absurd because we can uniquely desubstitute to find out that $s_3(x_{[0...p-1]}20) \in \lang_{s_3}$ which is a contradiction because then $x_{[0...p-1]}2$ would be in $\lang_{s_3}$.
 \end{itemize}
Finally, iterating the application yields that for any positive integer $n$, the maximal prefix  in the language $\lang_{s_3}$ of $s_3^n(x)$, where $x$  is defined as before,  is:
$$
s_3^n(x_{[0..p-1]})s_3^{n-1}(0)...0.
$$
\end{proof}

Let us now prove the following lemma: 

\begin{lemma}\label{prop.recog_tribo}
 $$
 \forall n \geq 3,\ \forall d \in \N, \  \omega^{s_3}_{[d...d+t_n-1]}=s_3^n(0) \Leftrightarrow d \in D_{s_3}^n.
 $$
 where $\omega^{s_3}$ is the fixed point in $\{0,1,2\}^\N$ for the Tribonacci substitution and $t_n$ is the length of $s_2^n(0)$.
\end{lemma}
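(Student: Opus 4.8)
This statement is precisely the $k=3$ instance of Proposition~\ref{p.recog_kbo}, so the plan is to reproduce the two-step argument of that proof with every quantity made explicit for the Tribonacci substitution. The backbone is the triple of block identities valid for $n\geq 3$,
$$
s_3^n(0)=s_3^{n-1}(0)s_3^{n-2}(0)s_3^{n-3}(0),\quad s_3^n(1)=s_3^{n-1}(0)s_3^{n-2}(0),\quad s_3^n(2)=s_3^{n-1}(0),
$$
which follow from Remark~\ref{r.length_tribo}, together with the elementary fact that $s_3^m(0)$ is always a prefix of $s_3^{m+1}(0)$ (since $0$ is a prefix of $s_3(0)$) and hence that every $n$-th image $s_3^n(b)$ begins with $s_3^{n-1}(0)$. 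I would prove the two implications separately, establishing the sufficiency of $d\in D_{s_3}^n$ first, because it is reused inside the induction that settles the converse.

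For the implication $d\in D_{s_3}^n\Rightarrow \omega^{s_3}_{[d...d+t_n-1]}=s_3^n(0)$, I use that $\omega^{s_3}$ is a fixed point of $s_3^n$, so $d$ starts an $n$-th image $s_3^n(a)$. If $a=0$ there is nothing to prove. If $a\in\{1,2\}$, the block identities show that $s_3^n(a)$ is exactly the first $3-a$ blocks of $s_3^n(0)$, while the remaining $a$ blocks $s_3^{n-(3-a)-1}(0)\cdots s_3^{n-3}(0)$ form a prefix of $s_3^{n-1}(0)$ by nesting of prefixes, and $s_3^{n-1}(0)$ in turn begins the following $n$-th image. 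Concatenating the two images therefore reproduces $s_3^n(0)$ over the window $[d...d+t_n-1]$.

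The converse, that every occurrence of $s_3^n(0)$ starts at a point of $D_{s_3}^n$, I would prove by induction on $n\geq 3$. For the base case $n=3$ I use the decomposition $s_3^3(0)=s_2^2(0)\,2\,s_2^2(0)=0102010$: since $s_2$ is defined over $\{0,1\}$, each $3$rd image of a letter contains exactly one occurrence of the letter $2$, and as every $3$rd image starts with $s_3^2(0)=0102$ this occurrence always sits at relative index $3$. Hence the unique $2$ of an occurrence of $0102010$ at position $d$, located at $d+3$, is the unique $2$ of the $3$rd image of $\omega^{s_3}$ that covers it, forcing that image to start at $d$, so $d\in D_{s_3}^3$. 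For the inductive step I write $s_3^{n+1}(0)=s_3^n(0)s_3^n(1)$ and consider an occurrence at $d$; its prefix $s_3^n(0)$ and the induction hypothesis give $d\in D_{s_3}^n$. Setting $e=d+t_n$, if $e\in D_{s_3}^n$ then $d,e$ are consecutive $D_{s_3}^n$-points, and since the letters have $s_3$-images of length $1$ or $2$ at least one of them lies in $D_{s_3}^{n+1}$ (using $D_{s_3}^{n+1}\subset D_{s_3}^n$, Remark~\ref{r.circularpoint}); as the block between them is $s_3^n(0)=s_3^{n+1}(2)$, either possibility yields $d\in D_{s_3}^{n+1}$.

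The hard part is the remaining case $e\notin D_{s_3}^n$, where the occurrence is misaligned with the $D_{s_3}^n$-grid. Here there is a letter $a\neq 0$ and an intermediate point $f\in D_{s_3}^n$ giving a decomposition $s_3^{n+1}(0)=s_3^n(a)\,w\,s_3^n(1)$ with $w=s_3^{n-(3-a)-1}(0)\cdots s_3^{n-3}(0)$ and $|w|+|s_3^n(a)|=t_n$; since $s_3^n(a)$ is a prefix of $s_3^n(1)$, the length-$t_n$ window starting at $f$ reads $w\,s_3^n(a)$. Applying the already-proven sufficiency at $f$ would force $w\,s_3^n(a)=s_3^n(0)$, which is impossible because the last block of $w\,s_3^n(a)$ is $s_3^{n-(3-a)}(0)$ whereas that of $s_3^n(0)$ is $s_3^{n-3}(0)$, and the terminal letters of $s_3^m(0)$ cycle with period $3$ so these differ when $a\not\equiv 0\pmod 3$. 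This contradiction closes the induction; the whole difficulty is concentrated in this misalignment case, which is exactly where the sufficiency direction and the period-$3$ behaviour of the block endpoints must be combined.
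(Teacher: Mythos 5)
Your proof is correct and takes essentially the same route as the paper's: sufficiency from the block identities and prefix nesting, then the converse by induction, with the base case settled by the unique letter $2$ sitting at relative index $3$ in every third-power image, and the inductive step by ruling out a misaligned intermediate point of $D_{s_3}^n$ via the already-proven sufficiency and the period-$3$ cycling of terminal letters (the right-markedness of $s_3$). The only difference is cosmetic: you handle the misaligned case uniformly in the letter $a$, as in the general Proposition~\ref{p.recog_kbo}, where the paper's appendix spells out the two sub-cases $a=1$ and $a=2$ separately.
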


\begin{proof}
 Let us first prove the following assertion:
  $$
 \forall n \geq 3, \forall d \in \N, \ d \in D_{s_3}^n  \Rightarrow \omega^{s_3}_{[d...d+t_n-1]}=s_3^n(0) .
 $$
 
 Let $n$ be an integer greater than 2 and $d$ be in $D_{s_3}^n$. Then $\omega^{s_3}_{[d...d+t_n-1]}$ is either equal to $s_3^n(0)$ or is a prefix of $s_3^n(1)s_3^n(0)$ or $s_3^n(2)s_3^n(0)$ since $ab \in \lang_{s_3}$ if and only if $a=0$ or $b=0$.
 \begin{itemize}
  \item If $\omega^{s_3}_{[d...d+t_n-1]}=s_3^n(0)$ then there is nothing to prove.
  \item If $\omega^{s_3}_{[d...d+t_n-1]}$ is a prefix of $s_3^n(1)s_3^n(0)$, then we write:
  $$
  s_3^n(1)s_3^n(0)=s_3^{n-1}(0)s_3^{n-1}(2)s_3^n(0),
  $$
  which can then be written
  $$
  s_3^n(1)s_3^n(0)=s_3^{n-1}(0)s_3^{n-2}(0)s_3^n(0),
  $$
  and noticing that $s_3^{n-3}(0)$ is a prefix of $s_3^n(0)$ yields
  $$
  \omega^{s_3}_{[d...d+t_n-1]}=s_3^n(0).
  $$
  \item If $\omega^{s_3}_{[d...d+t_n-1]}$ is a prefix of $s_3^n(2)s_3^n(0)$, then we write:
  $$
  s_3^n(2)s_3^n(0)=s_3^{n-1}(0)s_3^{n-1}(0)s_3^{n-1}(1)
  $$
  which can be written as
  $$
  s_3^n(2)s_3^n(0)=s_3^{n-1}(0)s_3^{n-2}(0)s_3^{n-2}(1)s_3^{n-1}(1),
  $$
  then again
  $$
  s_3^n(2)s_3^n(0)=s_3^{n-1}(0)s_3^{n-2}(0)s_3^{n-3}(0)s_3^{n-3}(2)s_3^{n-1}(1),
  $$
  so finally
  $$
  s_3^n(2)s_3^n(0)=s_3^n(0)s_3^{n-3}(2)s_3^{n-1}(1),
  $$
  hence 
   $$
  \omega^{s_3}_{[d...d+t_n-1]}=s_3^n(0).
  $$
 \end{itemize}

 The first step of the proof is thus complete.
 
 Let us now prove by induction on $n$ that 
  $$
 \forall n \geq 3,\   \forall d \in \N, \  \omega^{s_3}_{[d...d+t_n-1]}=s_3^n(0) \Rightarrow d \in D_{s_3}^n.
 $$
  
 First, we prove that this is true for the word $s_3^3(0)$.
 $$
 s_3^3(0)=0102010
 $$
 Let us also write the other images 
 $$
 s_3^3(1)=010201, \  s_3^3(2)=0102.
 $$
 Remark that $s_3^3(0)$ is of maximal length amongst the third power images of letters. Hence any occurence of this word contains at least one point in $D_{s_3}^3$, which we will represent by a point before the letter which has coordinate in $D_{s_3}^3$. Remark that all third power images contain one and only one letter $2$. Hence the only possibilities are:
 \begin{itemize}
  \item .0102010.
  \item .010201.0
  \item .0102.010
 \end{itemize}
 In any case, every occurence of $s_3^3(0)$ has starting position in $D_{s_3}^3$.
 This completes the initialisation.
 
 Let us now assume that this property is true for a given integer $n \geq 3$ and prove it for $n+1$.
 We represent the word $s_3^{n+1}(0)$ by a segment.
 
 \bigskip
  \begin{center}
     \begin{tikzpicture}
 \draw (0,0)--(8,0);
 \draw (0,0.2)--(0,-0.2);
 \draw (5,0.2)--(5,-0.2);
 \draw (8,0.2)--(8,-0.2);
 \draw (2.5,0) node [above] {$s_3^n(0)$};
 \draw (6.5,0) node [above] {$s_3^n(1)$};
 \draw (0,-0.2) node [below] {$d$};
 \draw (5,-0.2) node [below] {$e$};
\end{tikzpicture}
  \end{center}
 By induction hypothesis, $d$ is in $D_{s_3}^n$. We have three cases to treat.
 \begin{itemize}
  \item If $e$ is in $D_{s_3}^n$ then either $d$ or $e$ is in $D_{s_3}^{n+1}$ because they are two consecutive points of $D_{s_3}^n$ and the images of letters by the substitution $s_3$ are either of length 1 or 2.
  
  Moreover, if $e$ is in $D_{s_3}^{n+1}$, then writing $s_3^{n+1}(0)$ in the following way:
  \bigskip
  \begin{center}
     \begin{tikzpicture}
 \draw (0,0)--(8,0);
 \draw (0,0.2)--(0,-0.2);
 \draw (5,0.2)--(5,-0.2);
 \draw (8,0.2)--(8,-0.2);
 \draw (2.5,0) node [above] {$s_3^{n+1}(2)$};
 \draw (6.5,0) node [above] {$s_3^n(1)$};
 \draw (0,-0.2) node [below] {$d$};
 \draw (5,-0.2) node [below] {$e$};
\end{tikzpicture}
  \end{center}
  is enough to conclude that $d$ is in $D_{s_3}^{n+1}$.
  \item We can write $s_3^{n+1}(0)$ in the following way:
    \begin{center}
     \begin{tikzpicture}
 \draw (0,0)--(8,0);
 \draw (0,0.2)--(0,-0.2);
 \draw (3,0.2)--(3,-0.2);
 \draw (5,0.2)--(5,-0.2);
 \draw (8,0.2)--(8,-0.2);
 \draw (1.5,0) node [above] {$s_3^n(1)$};
 \draw (4,0) node [above] {$s_3^{n-3}(0)$};
 \draw (6.5,0) node [above] {$s_3^n(1)$};
 \draw (0,-0.2) node [below] {$d$};
  \draw (3,-0.2) node [below] {$f$};
 \draw (5,-0.2) node [below] {$e$};
\end{tikzpicture}
  \end{center}
  and assume that $f$ is in $D_{s_3}^{n}$.
  But then we would have $s_3^{n-3}(0)s_3^n(1)=s_3^n(0)$ which is impossible because the last letter differs.
  \item Finally we can write $s_3^{n+1}(0)$ in this way:
  \begin{center}
  \begin{tikzpicture}
  \draw (0,0)--(8,0);
  \draw (0,0.2)--(0,-0.2);
  \draw (2,0.2)--(2,-0.2);
  \draw (5,0.2)--(5,-0.2);
  \draw (8,0.2)--(8,-0.2);
  \draw (1,0) node [above] {$s_3^n(2)$};
  \draw (3.5,0) node [above] {$s_3^{n-2}(0)s_3^{n-3}(0)$};
  \draw (6.5,0) node [above] {$s_3^n(1)$};
  \draw (0,-0.2) node [below] {$d$};
  \draw (2,-0.2) node [below] {$g$};
  \draw (5,-0.2) node [below] {$e$};
  \end{tikzpicture}
  \end{center}
  Or still:
   \begin{center}
  \begin{tikzpicture}
  \draw (0,0)--(8,0);
  \draw (0,0.2)--(0,-0.2);
  \draw (2,0.2)--(2,-0.2);
  \draw (5,0.2)--(5,-0.2);
  \draw (8,0.2)--(8,-0.2);
  \draw (1,0) node [above] {$s_3^n(2)$};
  \draw (3.5,0) node [above] {$s_3^{n-2}(0)s_3^{n-3}(0)$};
  \draw (6.5,0) node [above] {$s_3^{n-1}(0)s_3^{n-1}(2)$};
  \draw (0,-0.2) node [below] {$d$};
  \draw (2,-0.2) node [below] {$g$};
  \draw (5,-0.2) node [below] {$e$};
  \end{tikzpicture}
  \end{center}
  and assuming that $g$ is in $D_{s_3}^n$ would yield that
  $$
  s_3^n(0)=s_3^{n-2}(0)s_3^{n-3}(0)s_3^{n-1}(0)
  $$
  which is impossible.
 \end{itemize}
 Finally, only the first case is possible and we always have $d \in D_{s_3}^{n+1}$ which ends the proof.

 \end{proof}

\end{document}